\let\le\leqslant  \let\phi\varphi
\let\opn\operatorname 
\theoremstyle{definition}
\newtheorem{DEF}{Definition}
\newtheorem*{EXAM*}{Example}
\theoremstyle{remark}
\newtheorem*{REM*}{Remark} 
\theoremstyle{plain}
\newtheorem{THM}{Theorem}
\newtheorem{PROP}{Proposition}
\newtheorem*{COR*}{Corollary}   
\address{A.~M.~Vershik, St.~Petersburg Department of Steklov Institute
of Mathematics, 27 Fontanka, St.~Petersburg
191023, Russia.}
\email{vershik@pdmi.ras.ru}
\address{M.~I.~Graev, Institute for System Studies, 36-1 Nakhimovsky
pr., 117218 Moscow, Russia.}
\email{graev\_36@mtu-net.ru}
\title{}
\keywords{Current group, 
canonical representation, special representation.}
\begin{document}


\maketitle

\markboth{A.~M.~Vershik and M.~I.~Graev}
{Special representations of nilpotent Lie groups}

\begin{center}{}{\bf
Special representations of nilpotent Lie groups
and the associated Poisson representations of
current groups} \end{center}
\bigskip
\begin{center}
A.~M.~Vershik \footnote{Partially supported by the RFBR grants
11-01-00677-a, 11-01-12092-ofi-m, 10-01-90411-ukr-a.} and
M.~I.~Graev \footnote{Partially supported by the RFBR grant
10-01-00041a.} 
 \end{center}
\bigskip
\rightline{\it \textbf{To the memory of our teacher and coauthor, the great Israel Gelfand}}

\vskip-1cm

{\tableofcontents}

The goal of this paper is to give a short version of the new model of the representation
of the current groups with a semisimple Lie group like  $(U(n,1))^X, (O(n,1))^X$. In the earlier
papers of 70-80-th with I.M.Gelfand who had posed the problem about irreducible representations of
the current group for $SL(2,R)$, we have used a Fock space (\cite{VGG,GGV}) similar to Araki's formalism
(\cite{Ar}).  The construction of the Fock unitary representation of the current group
$G^X$, where $G$ is an arbitrary locally compact group, is based on a special
representation of the original group~$G$, i.e., a unitary
representation of $G$ with nontrivial 1-cohomology (see, e.g.,
\cite{Ar, VGG}). Some groups, including a part of semisimple rank~$1$
Lie groups, have irreducible special unitary representations; see
\cite{VG, GGV} and the references in \cite{VG}. A necessary condition
for this is that the identity representation is not isolated
in the space of irreducible representations (i.e., the absence of
Kazhdan's property), see \cite{Karp, Del}. The representations of the
current groups $G^X$ corresponding to irreducible special
representations are also irreducible.

However after our papers (see \cite{VG,VG-1} and previous) became clear that
the crucial role in the constructions play nilpotent subgroups and
its one-dimensional extensions. In the same time for commutative and nilpotent
groups $G$, only the identity representation is special, and the
corresponding 1-cohomology group is just the group of characters of $G$
(i.e., the group of homomorphisms from $G$ to
$\mathbb T$). Nevertheless, there is a class of groups, including Lie
groups, that have no irreducible special representations, but have {\it
reducible} special representations. These representations also give
rise to construct the irreducible representations of current groups in Fock spaces.
In this paper, we confine ourselves to simply connected nilpotent Lie
groups on which there is a nontrivial action of the group
of automorphisms~${\mathbb R\,}^*_+$. In particular, this class contains all
connected, simply connected abelian and metabelian Lie groups, including 
the Heisenberg group and its generalizations.
This choice is justified by the fact that irreducible special
representations of semisimple Lie groups, if they do
exist, in fact come from reducible special representations of
certain nilpotent subgroups. This reduction to nilpotent subgroups is
useful also in other cases. On the other hand, our construction for
nilpotent groups is extremely simple and reduces just to the
construction of semidirect products.

Starting from an arbitrary unitary representation $T$ of a nilpotent
group $N$ of the form described above,
we construct a special (reducible)
representation $\widetilde T$ of this group and then extend it to a
special representation of the semidirect product
$G={\mathbb R\,}^*_+\rightthreetimes N$. After that, we construct the
representations of the corresponding current groups $N^X$ and $G^X$
associated with $\widetilde T$. To describe them, instead of the classical
Fock model, we use the {\it Poisson model} introduced in \cite{VG-1}
(for brevity, we will call it simply the Poisson representation),
which is more convenient for this class of groups.

The case of the Heisenberg group $N$ of order
$2n-1$ is considered separately. We describe the special representation
$\widetilde T$ of the group $P={\mathbb R\,}^*_+\rightthreetimes N$ associated with an
irreducible representation $T$ of the group $N$ and the corresponding
Poisson
representation $U$ of the current group
$P^X$; the representations
$\widetilde T$ and $U$ are also irreducible. We also briefly describe, using
an embedding of $P$
into the semisimple Lie group
$U(n,1)$, an extension of the special representation
$\widetilde T$ of the group $P$ to a representation of the group
$U(n,1)$ and, correspondingly, an extension of the representation of
the current group $P^X$ to a projective representation of the current
group $U(n,1)^X$ (for a detailed description, see \cite{VG-1}).

\smallskip
The authors are grateful to N.~Tsilevich for translating the paper
and useful remarks.

\subsection{Canonical families and the associated special representations
of groups}\label{sect:01}
We say that a unitary representation $T$ of a locally compact group
$G$ in a Hilbert space $H$ is {\it special} if it has a nontrivial
1-cocycle. Recall that a 1-cocycle of $T$ is a
continuous map
$\beta : G\to H$ such that
$\beta (g_1g_2)=T(g_1) \beta (g_2)+ \beta (g_1)$ for any $g_1,g_2\in G$.
Nontriviality means that $\beta $ cannot be written in the form
$T(g)h-h$ with $h\in H$. The construction of special representations
of the group $G$ (if they do exist) is based on the following notion of
a canonical system of representations.

Let $\{T_r,\,0<r\le r_0\}$ be a one-parameter family of unitary
representations of a locally compact group $G$ in Hilbert spaces
$H_r$ and $dm(r)$ be an infinite measure on $[0,r_0]$ such that
$\int_\epsilon  ^{r_0} dm(r)< \infty$  for $\epsilon >0$. In
particular, if $r_0 <  \infty$, we may take $dm(r)=r^{-1}dr$.

\begin{DEF}{}\label{DEF:1-1}
The family
$\{T_r,\,0<r\le r_0\}$ is called a {\it canonical family  of representations
with respect
to the measure $m$} if the spaces
$H_r$ do not contain nonzero vectors invariant under the operators
$T_r(g)$ and there exist unit vectors
$h_r\in H_r$ such that
\begin{equation}{}\label{2-111}
\int_0^{r_0}\|T_r(g)h_r-h_r\|_{H_r}dm(r)< \infty \quad\text{for every}\quad g\in G.
\end{equation}
The family of vectors $\{h_r\}$ will be called almost invariant with
respect to $G$.
\end{DEF}

Note that for the measure $dm(r)=r^{-1}dr$, condition~\eqref{2-111}
follows from a stronger estimate: there exists
$\alpha >0$ such that
\begin{equation}{}\label{2-11}
\|T_r(g)h_r-h_r\|=O(r^{\alpha })\quad\text{for every}\quad g\in G.
\end{equation}

In the definition of a canonical family, we may assume that the spaces
$H_r$ are identified with a fixed Hilbert space $H$ so that the images
of all vectors $h_r$ coincide with a fixed unit vector
$h\in H$. Then condition~\eqref{2-111} takes the following form.

{\it In the space $H$, there is no nonzero vector invariant under the
operators $T_r(g)$ and there is at least one nonzero vector $h$
such that}
\begin{equation}{}\label{2-2}
\int_0^{r_0}\|T_r(g)h-h\|dm(r)< \infty \quad\text{for every}\quad g\in G.
\end{equation}
Such a vector $h$ will be called an almost invariant vector in the space $H$.
Clearly, the set of all almost invariant vectors is a linear subspace
of $H$.

\begin{REM*}{}
It follows from the definition of a canonical family that the identity
representation is a limiting point for
the representations $T_r$ as $r\to 0$ in the Fell topology on
the set of unitary representations of $G$.
\end{REM*}

With every canonical family
$\{T_r,\,0<r<r_0\}$ of representations of a group $G$ in a Hilbert
space $H$ with respect to a measure
$m$ on $[0,r_0]$ we associate a new unitary representation~$\widetilde T$
of this group. It is realized in the direct integral of the Hilbert spaces
$H_r=H$ with
respect to the measure $m$, i.e., in the Hilbert space
$$
\mathcal H=\int_0^{r_0}  H_r dm(r), \quad H_r=H,
$$
of sections $f(r)$ of the fiber bundle over
$[0,r_0]$  with fibers $H_r$, $r\in [0,r_0]$, endowed with the norm
$\|f\|^2=\int_0^{r_0} \|f(r)\|^2 dm(r)$. The operators of this
representation act fiberwise:
$(\widetilde T(g)f)(r)=T_r(g)f(r)$.

\begin{THM}{}\label{THM:31-32} The representation $\widetilde T$ of the
group $G$ associated with a canonical system of representations
$\{T_r : 0<r<r_0\}$ is a special representation with a nontrivial
1-cocycle of the form
\begin{equation}{}\label{31-32}
\beta (g;r)=T_r(g)h_r-h, \quad h_r=h,
\end{equation}
where $h\in H$ is an almost invariant vector in $H$.
\end{THM}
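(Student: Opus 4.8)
The plan is to verify directly that the map $\beta(g;r) = T_r(g)h - h$ (writing $h_r = h$ under the identification of all $H_r$ with $H$) defines an element of the direct integral space $\mathcal H$ for each $g \in G$, that it satisfies the cocycle identity with respect to $\widetilde T$, and that it is not a coboundary. First I would check that $\beta(g;\cdot)$ lies in $\mathcal H$: its squared norm is $\int_0^{r_0}\|T_r(g)h - h\|^2\,dm(r)$, and this is finite because the integrand is bounded (by $4\|h\|^2 = 4$) on any interval $[\epsilon, r_0]$, where $m$ has finite mass, while near $r = 0$ the integrability of $\|T_r(g)h-h\|$ against $dm$ from condition \eqref{2-2}, combined with the pointwise bound $\|T_r(g)h-h\| \le 2$, gives $\|T_r(g)h-h\|^2 \le 2\|T_r(g)h-h\|$, which is $dm$-integrable. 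Hence $\beta(g;\cdot) \in \mathcal H$ for every $g$.

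Next I would establish the cocycle identity. Since $\widetilde T$ acts fiberwise, it suffices to check, for fixed $r$, that $\beta(g_1 g_2; r) = (T_r(g_1)\beta(g_2;\cdot))(r) + \beta(g_1;r)$, i.e. that $T_r(g_1 g_2)h - h = T_r(g_1)(T_r(g_2)h - h) + (T_r(g_1)h - h)$; this is immediate from $T_r$ being a representation, the $h$ and $-T_r(g_1)h$ terms cancelling. Continuity of $g \mapsto \beta(g;\cdot)$ in $\mathcal H$ follows from strong continuity of each $T_r$ together with dominated convergence, using again the uniform bound $2$ on the fibers and the $dm$-integrability near $0$.

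The main obstacle is proving nontriviality, i.e. that $\beta$ is not of the form $\widetilde T(g)F - F$ for any $F \in \mathcal H$. Suppose it were, with $F(r) \in H_r$. Fiberwise this forces $T_r(g)h - h = T_r(g)F(r) - F(r)$ for almost every $r$, i.e. $T_r(g)(h - F(r)) = h - F(r)$ for all $g \in G$ and a.e. $r$. Thus for a.e. $r$ the vector $h - F(r)$ is invariant under all operators $T_r(g)$. But by the defining property of a canonical family, $H_r = H$ contains no nonzero vector invariant under $T_r(G)$, so $h - F(r) = 0$, i.e. $F(r) = h$ for a.e. $r$. This contradicts $F \in \mathcal H$, since the constant section $r \mapsto h$ has squared norm $\int_0^{r_0}\|h\|^2\,dm(r) = \int_0^{r_0} dm(r) = \infty$ because $m$ is an infinite measure. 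Therefore no such $F$ exists, $\beta$ is a nontrivial cocycle, and $\widetilde T$ is special. A small point to address carefully is measurability — that the exceptional $r$-set in the fiberwise argument can be taken independent of $g$, which follows by running the argument on a countable dense subset of $G$ and invoking continuity.
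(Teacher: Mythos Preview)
Your proof is correct and follows the same line of reasoning as the paper's own argument, which is a two-sentence sketch: $\beta(g)\in\mathcal H$ by the defining condition of a canonical family, and nontriviality follows from the absence of invariant vectors together with the infiniteness of $m$. You have simply filled in the details the paper leaves implicit --- in particular the passage from the $L^1$ condition \eqref{2-111} to the $L^2$ condition needed for membership in $\mathcal H$ via the pointwise bound $\|T_r(g)h-h\|\le 2$, and the explicit fiberwise coboundary argument forcing $F(r)=h$ a.e.; these are exactly what the paper's ``it follows from the definition'' and ``follows from the fact that'' are pointing to.
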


Indeed, it follows from the definition of a canonical family that the vector
$\beta (g)$ with components
$T_r(g)h_r-h_r\in H_r$ lies in the space $\mathcal H$ and, consequently, is a
1-cocycle of the representation
$\widetilde T$ of the group $G$ in the space
$\mathcal H$. The non-triviality follows from the fact that
$H$ has no invariant vectors and the measure $m$ on
$[0,r_0]$ is infinite.

\begin{REM*}{} It is of interest to find out the structure of
representations $T$ that are limiting points
in the Fell topology of the representations $T_r$ of a canonical
system in the case where all $T_r$ are irreducible. If the group $G$
has no special {\it irreducible} representations (it is this case that
we consider in the paper), then the only possible limit is the identity
representation. If there exist irreducible special representations, then
a limiting point can also coincide with one of them. It is this phenomenon
that we observe in the
case where~$T_r$ is a family of complementary series representations
of the semisimple Lie group
$O(n,1)$ or  $U(n,1)$, see \cite{VG}.
\end{REM*}

\subsection{The semidirect products  $G={\mathbb R\,}^*_+\rightthreetimes G_0$
and their special representations}\label{sect:02}
In this section, we introduce a
family of groups having a canonical system of
representations.

Let $G_0$ be an arbitrary locally compact group on which an action of
the group of automorphisms~${\mathbb R\,}^*_+$ is defined:
$r: g_0 \mapsto g_0^r$ for every  $r\in{\mathbb R\,}^*_+$. Then
\begin{enumerate}
\item The group $G_0$ can be realized as a subgroup of the semidirect
product $G={\mathbb R\,}^*_+\rightthreetimes G_0$, i.e., the group of pairs
$(r,g)\in{\mathbb R\,}^*_+ \times G_0$ with the multiplication law
$$
(r_1,g_1)(r_2,g_2)=(r_1r_2, g_1^{r_2}g_2).
$$
\item Every unitary representation $T$ of the group
$G_0$ in a Hilbert space $K$ gives rise to a one-parameter family of
unitary representations $T_r$,  $r\in{\mathbb R\,}^*_+$, of $G_0$ in the same
space $K$, which are given by the formula
$$
T_r(g)=T(g^r).
$$
We say that these representations are {\it conjugate} to the original
representation $T$.
\end{enumerate}

\begin{DEF}{}\label{DEF:41-2} A unitary representation $T$ of the
group $G_0$ in a space $K$ is called {\it summable} with respect to an
infinite measure $m$ on
${\mathbb R\,}^*_+$ with $\int_ \epsilon ^\infty dm(r)< \infty $ for
$\epsilon >0$ if the representations $T_r$ of $G_0$ conjugate to
$T$ form a canonical system with respect to $m$.

The group $G_0$ will be called {\it nice}
if it has at least one summable representation.
\end{DEF}

In what follows, we assume that
$dm(r)=e^{-\frac{u(r)}{2}} d^*r$, where
$d^*r =r^{-1}dr$ is the Haar measure on ${\mathbb R\,}^*_+$ and $u(r)$ is a
smooth positive function on the half-line such that
$\int _ \epsilon ^ \infty  e^{-\frac{u(r)}{2}}d^*r< \infty $ for $\epsilon >0$.

Then the summability condition for a representation $T$ in a space $K$
means
the non-existence of invariant vectors $h\ne 0$ in $K$ and
the existence of a nonzero vector
$h\in K$ such that
\begin{equation}{}\label{2-22}
\int_0^{\infty }\|T_r(g)h-h\|e^{-\frac{u(r)}{2}}d^*r< \infty
\quad\text{for every}\quad g\in G.
\end{equation}

Theorem~\ref{THM:31-32} implies the following result.

\begin{THM}{}\label{THM:31-33} Every summable unitary representation
$T$ of the group $G_0$ in a Hilbert space~$K$ gives rise to a special
representation
$\widetilde T$
of this group in the Hilbert space
$\mathcal K=\int_0^\infty   K_r d^*r$, $K_r=K$, which is defined by the
formula
$(\widetilde T (g)f)(r)=T_r(g) f(r)$.
With every almost invariant vector $h \in K$ is associated
the nontrivial 1-cocycle
$\beta :G_0 \to \mathcal K$,
\begin{equation}{}\label{31-29}
\beta (g,r) =e^{-\frac{u(r)}{2}} (T_r(g)h_r-h_r), \quad h_r=h.
\end{equation}
\end{THM}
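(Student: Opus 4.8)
The plan is to derive Theorem~\ref{THM:31-33} as a direct specialization of Theorem~\ref{THM:31-32} to the measure $dm(r)=e^{-u(r)/2}d^*r$ on $(0,\infty)$, after verifying that the hypotheses of a canonical family are met. First I would observe that, by Definition~\ref{DEF:41-2} together with the standing assumption on $dm(r)$, a summable representation $T$ of $G_0$ is precisely one for which the conjugate family $\{T_r\}$, $T_r(g)=T(g^r)$, satisfies the two requirements in Definition~\ref{DEF:1-1}: the common space $K$ contains no nonzero $T_r(g)$-invariant vector, and there is a unit vector $h\in K$ with $\int_0^\infty\|T_r(g)h-h\|\,e^{-u(r)/2}\,d^*r<\infty$ for every $g$ (this is exactly~\eqref{2-22}, i.e.~\eqref{2-2} with $r_0=\infty$). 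Here the only mild point is that the integration range is $(0,\infty)$ rather than $[0,r_0]$ with finite $r_0$; but the definition of a canonical family was stated for a general infinite measure $m$ with $\int_\epsilon^{r_0}dm(r)<\infty$, and the condition $\int_\epsilon^\infty e^{-u(r)/2}d^*r<\infty$ imposed on $u$ guarantees exactly that, so the framework applies verbatim with $r_0=\infty$.

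Next I would invoke Theorem~\ref{THM:31-32} for this canonical family: it produces the special representation $\widetilde T$ acting fiberwise on $\mathcal H=\int_0^{\infty}H_r\,dm(r)$ with $H_r=K$, by $(\widetilde T(g)f)(r)=T_r(g)f(r)$. Identifying this direct integral with $\mathcal K=\int_0^\infty K_r\,d^*r$ is the standard isometry of $L^2$-spaces under a change of density: a section $f$ is square-integrable for $dm(r)=e^{-u(r)/2}d^*r$ iff $r\mapsto e^{-u(r)/4}f(r)$ is square-integrable for $d^*r$. Under this unitary identification the fiberwise action is unchanged in form, so $\widetilde T$ is realized on $\mathcal K$ by the stated formula $(\widetilde T(g)f)(r)=T_r(g)f(r)$.

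Finally, I would transport the cocycle. Theorem~\ref{THM:31-32} gives the nontrivial $1$-cocycle $\beta(g;r)=T_r(g)h_r-h$ with $h_r=h$, viewed as an element of $\mathcal H$; it lies in $\mathcal H$ precisely because of~\eqref{2-22}, and is nontrivial because $K$ has no invariant vectors while $m$ is an infinite measure (so a potential coboundary $r\mapsto T_r(g)\xi(r)-\xi(r)$ with $\xi\in\mathcal H$ cannot equal it — this is the argument already indicated after Theorem~\ref{THM:31-32}). Pushing $\beta$ through the isometry $\mathcal H\to\mathcal K$ multiplies each fiber component by $e^{-u(r)/4}\cdot$(the normalization implicit in matching norms), which yields exactly the displayed formula $\beta(g,r)=e^{-u(r)/2}(T_r(g)h_r-h_r)$ with $h_r=h$; one checks directly that this is the cocycle relation for $\widetilde T$ on $\mathcal K$. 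The only genuine thing to be careful about is the bookkeeping of the density factor under the $L^2$-identification so that the cocycle comes out with the weight $e^{-u(r)/2}$ exactly as stated, and the verification that nontriviality is preserved by the (unitary) change of model — both of which are routine once the identification is set up correctly. I expect no serious obstacle; the content is essentially a restatement of Theorem~\ref{THM:31-32} in the concrete measure-theoretic setting of semidirect products, so the proof is short.
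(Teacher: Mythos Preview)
Your overall strategy---specialize Theorem~\ref{THM:31-32} to the measure $dm(r)=e^{-u(r)/2}\,d^*r$---is exactly what the paper does; indeed the paper offers nothing beyond the sentence ``Theorem~\ref{THM:31-32} implies the following result.'' So the route is right.

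There is, however, a concrete error in your bookkeeping of the density factor, precisely at the point you flag as ``the only genuine thing to be careful about.'' The unitary identification $\mathcal H=\int K\,dm(r)\to\mathcal K=\int K\,d^*r$ is multiplication by the square root of the Radon--Nikodym derivative, namely by $e^{-u(r)/4}$. Transporting the cocycle \eqref{31-32} through this isometry therefore produces
\[
e^{-u(r)/4}\bigl(T_r(g)h-h\bigr),
\]
not $e^{-u(r)/2}\bigl(T_r(g)h-h\bigr)$. There is no further ``normalization implicit in matching norms''; the square root is the whole story. So \eqref{31-29} is \emph{not} the image of \eqref{31-32} under your isometry, and this part of your argument does not establish the formula as stated.

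The fix is to abandon the transport and verify \eqref{31-29} directly in $\mathcal K$. Any map of the form $g\mapsto\phi(r)\bigl(T_r(g)h-h\bigr)$ with $\phi$ independent of $g$ is a $1$-cocycle for the fiberwise action, so the cocycle identity is immediate. Membership in $\mathcal K$ follows from \eqref{2-22} together with the uniform bound $\|T_r(g)h-h\|\le 2$, since then $e^{-u(r)}\|T_r(g)h-h\|^2\le 2\,e^{-u(r)/2}\|T_r(g)h-h\|$. For nontriviality one argues as after Theorem~\ref{THM:31-32}: the only candidate coboundary is $\xi(r)=e^{-u(r)/2}h$, and $\xi\notin\mathcal K$ because $\int_0^\infty e^{-u(r)}\,d^*r=\infty$. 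Note that this last divergence is not a formal consequence of the infiniteness of $m$ alone; it uses that $u$ is bounded near $0$ (the paper tacitly assumes $u(0)=0$, cf.\ the proof of Theorem~\ref{THM:31-35}), which forces $e^{-u(r)}$ to be bounded away from $0$ near $r=0$ and hence non-integrable against $d^*r$. The particular exponent $e^{-u(r)/2}$ in \eqref{31-29} is chosen not for isometric reasons but so that the cocycle is the formal coboundary of $f(r)=e^{-u(r)/2}h$, which is exactly what is needed for the extension to $G={\mathbb R}^*_+\rightthreetimes G_0$ in Theorem~\ref{THM:31-34}.
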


\begin{THM}{}\label{THM:31-34} The special unitary representation
$\widetilde T$ of the group $G_0$ can be extended to a representation
of the semidirect product
$G={\mathbb R\,}^*_+\rightthreetimes G_0$ in which the operators corresponding
to elements of the subgroup ${\mathbb R\,}^*_+$ have the form
\begin{equation}{}\label{311-35}
(\widetilde T(r_0)f)(r)=f(rr_0).
\end{equation}
With every almost invariant vector $h \in K$ is associated
the nontrivial 1-cocycle $\beta :G \to \mathcal K$,
\begin{equation}{}\label{31-34}
\beta (g,r)=(\widetilde T(g)f)(r)-f(r),\quad\text{where}\quad
f(r)=e^{-\frac{u(r)}{2}} h_r.
\end{equation}
\end{THM}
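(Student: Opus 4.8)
The plan is to verify directly that the formula \eqref{311-35} together with the already-constructed fiberwise operators $(\widetilde T(g)f)(r)=T_r(g)f(r)$ for $g\in G_0$ defines a unitary representation of the semidirect product $G={\mathbb R\,}^*_+\rightthreetimes G_0$, and then to read off the 1-cocycle. First I would check that the operators $(\widetilde T(r_0)f)(r)=f(rr_0)$ are well defined and unitary on $\mathcal K=\int_0^\infty K_r\,d^*r$: since $d^*r=r^{-1}dr$ is the Haar measure on the multiplicative group ${\mathbb R\,}^*_+$, it is invariant under the dilation $r\mapsto rr_0$, hence $\|\widetilde T(r_0)f\|^2=\int_0^\infty\|f(rr_0)\|^2\,d^*r=\int_0^\infty\|f(r)\|^2\,d^*r=\|f\|^2$; the inverse is $\widetilde T(r_0^{-1})$, and $r_0\mapsto\widetilde T(r_0)$ is strongly continuous because translation is continuous in $L^2$ of a locally compact group. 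Likewise $r_0\mapsto\widetilde T(r_0)$ is a homomorphism: $(\widetilde T(r_0)\widetilde T(r_0')f)(r)=(\widetilde T(r_0')f)(rr_0)=f(rr_0r_0')$.

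The heart of the argument is the compatibility (covariance) relation matching the multiplication law $(r_1,g_1)(r_2,g_2)=(r_1r_2,\,g_1^{r_2}g_2)$ of $G$. Writing $\widetilde T(r,g)=\widetilde T(r)\widetilde T(g)$, I must show $\widetilde T(r_1)\widetilde T(g_1)\widetilde T(r_1)^{-1}=\widetilde T(g_1^{r_1})$ (equivalently, that $\widetilde T$ intertwines the two sides of the multiplication law). Computing on a section $f$: $(\widetilde T(r_0)\widetilde T(g)f)(r)=(\widetilde T(g)f)(rr_0)=T_{rr_0}(g)f(rr_0)$, while $(\widetilde T(g^{r_0})\widetilde T(r_0)f)(r)=T_r(g^{r_0})f(rr_0)$. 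So the required identity is precisely $T_{rr_0}(g)=T_r(g^{r_0})$, and by definition $T_r(g)=T(g^r)$, so $T_r(g^{r_0})=T((g^{r_0})^r)=T(g^{r_0 r})=T_{rr_0}(g)$, using that $r\mapsto(\cdot)^r$ is an action of ${\mathbb R\,}^*_+$. This is the only place where the specific structure of the semidirect product enters, and it is a one-line verification; I expect the main (though still modest) obstacle to be the routine but slightly delicate point of checking strong continuity of the full representation of $G$, which follows from separate continuity in the two variables plus joint measurability, standard for such direct-integral constructions.

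Finally, for the cocycle: by Theorem~\ref{THM:31-33}, restricted to $G_0$ the vector $\beta(g)$ with components $e^{-u(r)/2}(T_r(g)h-h)$ is a nontrivial 1-cocycle of $\widetilde T\restriction_{G_0}$; I claim the same formula, rewritten as $\beta(g)=(\widetilde T(g)-I)f$ with $f(r)=e^{-u(r)/2}h_r$, extends to a 1-cocycle on all of $G$. Indeed, for $g$ in the ${\mathbb R\,}^*_+$-part the expression $(\widetilde T(r_0)-I)f$ is automatically a coboundary-type cocycle, and the cocycle identity $\beta(g_1g_2)=\widetilde T(g_1)\beta(g_2)+\beta(g_1)$ for $\beta(g)=(\widetilde T(g)-I)f$ is a formal identity valid whenever $\widetilde T$ is a representation; the only genuine content is that $(\widetilde T(g)-I)f$ actually lies in $\mathcal K$ for every $g\in G$. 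For $g\in G_0$ this is the summability hypothesis \eqref{2-22}; for $g=r_0\in{\mathbb R\,}^*_+$ one estimates $\|(\widetilde T(r_0)-I)f\|^2=\int_0^\infty\|e^{-u(rr_0)/2}-e^{-u(r)/2}\|^2\|h\|^2\,d^*r$, which is finite because $e^{-u/2}\in L^2(d^*r)$ near infinity and $u$ is smooth; a general element $(r_0,g)$ is handled by the cocycle identity from these two cases. Nontriviality on $G$ follows from nontriviality already on the subgroup $G_0$, established in Theorem~\ref{THM:31-33} (the absence of $\widetilde T(G_0)$-invariant vectors, which forces any trivializing $h_0\in\mathcal K$ to be $0$, since $m$ is infinite). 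This completes the proof.
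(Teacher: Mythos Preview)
Your proposal is correct and follows essentially the same route as the paper's own proof: unitarity of $\widetilde T(r_0)$ from the invariance of the Haar measure $d^*r$, the (there unproved) assertion that the two families of operators combine to a representation of $G$, reduction of the cocycle claim on $G_0$ to Theorem~\ref{THM:31-33}, and the explicit estimate $\|\beta(r_0)\|^2=\int_0^\infty|e^{-u(rr_0)/2}-e^{-u(r)/2}|^2\,d^*r<\infty$ for $r_0\in{\mathbb R\,}^*_+$. You in fact supply more than the paper does, namely the explicit covariance check $T_{rr_0}(g)=T_r(g^{r_0})$ and the remark that the cocycle identity for $\beta(g)=(\widetilde T(g)-I)f$ is formal once $\widetilde T$ is a representation, so that only membership in $\mathcal K$ needs to be verified; the paper compresses all of this into ``one can easily see.''
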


\begin{proof}{} The invariance of the Haar measure
$d^*r$ implies that the operators corresponding to elements of the
subgroup ${\mathbb R\,}^*_+$ are unitary. One can easily see that together with
the operators corresponding to elements of the subgroup $G_0$ they
generate a representation of the whole group $G$.

Further, let us verify that
$\beta (g)$ is a nontrivial 1-cocycle on $G$. First of all, for
$g\in G_0$, formula~\eqref{31-34} coincides with
\eqref{31-29}, so that $\beta (g)$ is a nontrivial 1-cocycle on
$G_0$. Hence it suffices to check that
$\|\beta (r_0)\|< \infty $
for $r_0\in{\mathbb R\,}^*_+$. In this case,
$\beta (g,r)=(e^{-\frac{u(rr_0)}{2}}-e^{-\frac{ u(r)}{2}})h_r$, whence
$\|\beta (r_0)\|^2= { \int_0^\infty |e^{-\frac{u(rr_0)}{2}}-
e^{-\frac{u(r)}{2}}|^2 d^*r } < \infty$.
\end{proof}

\smallskip

As we mentioned earlier, there is a subgroup
$K_0 \subset K $ such that $K_0 \setminus 0$
is the set of all almost invariant vectors $h \in K$.
Let us fix a function $u(r)=u_0(r)$.
We can associate to each vector $h \in K_0$
the 1-cocycle $\beta :G\to \mathcal K$ of the form \eqref{31-34}
with $f(r)=e^{-\frac{u_0(r)}{2}}h$.
The subset of all such cocycles forms a subgroup, wchich we denote
$Z_0^1$.

Let $v:K_0$ onto $Z_0^1$ be the natural epimorphism.
If $h,h'\in K_0$, and $h- h'\ne0$, then $h-h'$ is an almost invariant vector.
By Theorem~\ref{THM:31-34} the 1-cocycle $v(h)-v(h')=v(h-h')$ is non-cohomologous to $0$.
This implies:

\begin{COR*}{}
Cocycles in $Z_0^1$ are pairwise non-cohomologous.
The groups $K_0$ and $Z_0^1$ are isomorphic, $K_0\cong Z_0^1$.
\end{COR*}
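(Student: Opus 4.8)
The plan is to reduce the corollary to the statement that the map $v\colon K_0\to Z_0^1$ is a group isomorphism, since "pairwise non-cohomologous" is then an immediate consequence of injectivity together with Theorem~\ref{THM:31-34}. First I would observe that $v$ is surjective by construction: $Z_0^1$ was \emph{defined} as the set of cocycles of the form \eqref{31-34} with $f(r)=e^{-u_0(r)/2}h$ as $h$ ranges over $K_0$, so $v$ is the indicated assignment $h\mapsto\beta_h$, and it is onto by definition.

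Next I would check that $v$ is a homomorphism. The cocycle attached to $h$ is, by \eqref{31-34} restricted to the relevant component, $\beta_h(g,r)=(\widetilde T(g)f_h)(r)-f_h(r)$ with $f_h(r)=e^{-u_0(r)/2}h$; since $f_{h+h'}=f_h+f_{h'}$ and $\widetilde T(g)$ is linear, one gets $\beta_{h+h'}=\beta_h+\beta_{h'}$, so $v(h+h')=v(h)+v(h')$. (Here I am using that $K_0$ is a \emph{linear} subspace of $K$, which was noted right after \eqref{2-2}: the set of almost invariant vectors is a linear subspace, hence an additive group, and likewise $Z_0^1$ is closed under addition.) Thus $v$ is a surjective group homomorphism between the additive groups $K_0$ and $Z_0^1$.

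The only remaining point — and the one place where the substance of the paper is used — is injectivity, equivalently triviality of the kernel. Suppose $h\in K_0$ with $v(h)=0$, i.e.\ the cocycle $\beta_h$ is the zero element of $Z^1(G,\mathcal K)$. In particular $\beta_h$ is cohomologous to $0$. But $\beta_h=v(h)=v(h-0)$, and by Theorem~\ref{THM:31-34} (applied with $h'=0$) every nonzero vector of $K_0$ yields a cocycle that is \emph{non-cohomologous} to $0$; hence $h$ cannot be nonzero, so $h=0$ and $\ker v=\{0\}$. More generally, for $h\ne h'$ in $K_0$ the difference $h-h'$ is again a nonzero almost invariant vector, so $v(h)-v(h')=v(h-h')$ is non-cohomologous to $0$, in particular nonzero; this simultaneously gives injectivity of $v$ and the assertion that distinct cocycles in $Z_0^1$ are pairwise non-cohomologous. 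Combining surjectivity, the homomorphism property, and injectivity yields $K_0\cong Z_0^1$, completing the proof.

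The main obstacle is essentially bookkeeping rather than analysis: one must be careful that $Z_0^1$ is genuinely a subgroup (so that "pairwise non-cohomologous" even makes sense as a statement about a group) and that the difference of two cocycles in $Z_0^1$ is again the cocycle attached to the difference of the underlying vectors — this is exactly where linearity of $K_0$ and of the operators $\widetilde T(g)$ is invoked, and it is what converts the per-vector non-triviality statement of Theorem~\ref{THM:31-34} into a global statement about cohomology classes. No further estimate beyond the finiteness $\|\beta(r_0)\|<\infty$ already established in the proof of Theorem~\ref{THM:31-34} is needed.
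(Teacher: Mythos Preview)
Your proposal is correct and follows essentially the same argument as the paper: the paper also defines $v$ as the natural epimorphism $K_0\to Z_0^1$, observes that for $h\ne h'$ the difference $h-h'$ is a nonzero almost invariant vector, and invokes Theorem~\ref{THM:31-34} to conclude that $v(h)-v(h')=v(h-h')$ is non-cohomologous to $0$, from which both assertions of the corollary follow. You have simply made explicit the linearity/homomorphism checks that the paper leaves implicit.
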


\begin{THM}{}\label{THM:31-35} Each $1$-cocycle $\beta $ of the form \eqref{31-34}
is cohomologous to a (unique) cocycle
lies in the group
$Z_0^1 \cong K_0 $.
\end{THM}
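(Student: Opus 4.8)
The plan is to show that every cocycle of the form \eqref{31-34} differs from a suitable element of $Z_0^1$ by an honest coboundary, and then to read off uniqueness from the Corollary above. First I would fix notation: a cocycle of the form \eqref{31-34} is determined by an admissible weight $u$ (smooth, positive, with $\int_\epsilon^\infty e^{-u(r)/2}\,d^*r<\infty$) and an almost invariant vector $h\in K_0$; write it $\beta_{h,u}(g)=\widetilde T(g)f_u-f_u$ with $f_u(r)=e^{-u(r)/2}h$. Although $f_u\notin\mathcal K$, Theorem~\ref{THM:31-34} guarantees that $\widetilde T(g)f_u-f_u\in\mathcal K$ for every $g\in G$, and by construction $Z_0^1=\{\beta_{h,u_0}:h\in K_0\}$.

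For existence I would compare $\beta_{h,u}$ with $\beta_{h,u_0}$. Put $\eta(r)=\bigl(e^{-u(r)/2}-e^{-u_0(r)/2}\bigr)h$. Fiberwise one has the identity
\[
\beta_{h,u}(g;r)-\beta_{h,u_0}(g;r)=T_r(g)\eta(r)-\eta(r),
\]
so as soon as $\eta\in\mathcal K$ this exhibits $\beta_{h,u}-\beta_{h,u_0}$ as the coboundary $\widetilde T(\cdot)\eta-\eta$; hence $\beta_{h,u}$ is cohomologous to $\beta_{h,u_0}\in Z_0^1$. Thus the representative is the cocycle obtained from $\beta$ by keeping $h$ and replacing $u$ by $u_0$.

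The crux is therefore the estimate $\|\eta\|^2=\|h\|^2\int_0^\infty\bigl|e^{-u(r)/2}-e^{-u_0(r)/2}\bigr|^2\,d^*r<\infty$, and I would split the integral at $r=1$. For $r\ge 1$, positivity of $u,u_0$ gives $0<e^{-u/2},e^{-u_0/2}\le 1$, hence $\bigl|e^{-u/2}-e^{-u_0/2}\bigr|^2\le\bigl|e^{-u/2}-e^{-u_0/2}\bigr|\le e^{-u/2}+e^{-u_0/2}$, which is $d^*r$-integrable on $[1,\infty)$ by admissibility — the same mechanism as at the end of the proof of Theorem~\ref{THM:31-34}. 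For $0<r\le 1$ one uses the regularity built into the notion of an admissible weight (smoothness up to the origin, with a common normalization there): then $e^{-u(r)/2}-e^{-u_0(r)/2}$ is smooth on $[0,1]$ and vanishes at $r=0$, so it is $O(r)$ near $0$ and $\int_0^1 O(r)^2\,d^*r<\infty$; more weakly, it is enough that $e^{-u/2}-e^{-u_0/2}\in L^2\bigl([0,1],d^*r\bigr)$. I expect this near-$0$ convergence — i.e. pinning down precisely which functions $u$ are admitted — to be the one delicate point; away from the origin the bound is forced purely by positivity and by integrability at infinity.

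Finally, for uniqueness, suppose $\beta$ is cohomologous to both $\beta_{h',u_0}$ and $\beta_{h'',u_0}$ in $Z_0^1$. Then $\beta_{h',u_0}-\beta_{h'',u_0}=\beta_{h'-h'',u_0}$ is a coboundary; but the Corollary above says the cocycles of $Z_0^1$ are pairwise non-cohomologous (equivalently, by Theorem~\ref{THM:31-34}, $\beta_{h'-h'',u_0}$ is non-trivial whenever $h'-h''\neq 0$), so $h'=h''$. This gives the asserted uniqueness, and together with the previous paragraph completes the proof.
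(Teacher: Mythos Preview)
Your proof is correct and follows essentially the same approach as the paper: show that $\eta=f_u-f_{u_0}$ lies in $\mathcal K$, whence the difference of the two cocycles is the coboundary $\widetilde T(\cdot)\eta-\eta$, and read off uniqueness from the Corollary. The paper's version is much terser---it simply imposes the normalization $u(0)=u_0(0)=0$, so that $f_u(0)=f_{u_0}(0)=h$, and asserts $\int_0^\infty\|f_u(r)-f_{u_0}(r)\|^2\,d^*r<\infty$ directly from that---whereas you supply the explicit splitting at $r=1$ and spell out the uniqueness argument, both of which the paper leaves to the reader.
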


\begin{proof}{}
Let
$u(0)=u_0(0)=0$, and let $f_0(r)=e^{\frac{-u_0(r)}{2}}h_r$.
Then $f(0)=f_0(0)=h$. Therefore
$$
\int_0^\infty \|f(r)-f_0(r)\| ^2 \,  d^*r   < \infty .
$$
So, $f(r)-f_0(r) \in \mathcal K$, and the 1-cocycle $\beta $
is cohomologous to the 1-cocycle $\beta _0 = v(h)\in  Z_0^1 $.
\end{proof}

\begin{THM}{}\label{THM:31-36} If the  summable representation $T$ of the
group $G_0$ is  irreducible and  the representations $T_r$ of the group $G_0$
conjugate to $T$ are pairwise nonequivalent, then the special representation  $\widetilde T$
of the group $G={\mathbb R\,}^*_+\rightthreetimes G_0$ associated with
$T$ is also irreducible.
\end{THM}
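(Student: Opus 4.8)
The plan is to analyze the commutant of $\widetilde T$ as a representation of $G = {\mathbb R}^*_+ \rightthreetimes G_0$ on $\mathcal K = \int_0^\infty K_r\, d^*r$, and show it consists only of scalars. First I would restrict attention to the operators $\widetilde T(g)$ for $g \in G_0$: since these act fiberwise by $(\widetilde T(g)f)(r) = T_r(g)f(r)$, the representation of $G_0$ on $\mathcal K$ is the direct integral $\int_0^\infty T_r\, d^*r$. A bounded operator $A$ commuting with all $\widetilde T(g)$, $g\in G_0$, must be decomposable, i.e. given by a measurable field $r \mapsto A(r)$ with each $A(r)$ in the commutant of $T_r$ on $K_r = K$; because each $T_r(g) = T(g^r)$ and $T$ is irreducible, each $T_r$ is irreducible, so $A(r) = \lambda(r)\,\mathrm{Id}$ for an essentially bounded measurable scalar function $\lambda(r)$. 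Thus the commutant of $\widetilde T|_{G_0}$ is the algebra $L^\infty([0,r_0], m)$ acting by multiplication.

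Next I would bring in the operators $\widetilde T(r_0)$ from the subgroup ${\mathbb R}^*_+$, which by \eqref{311-35} act by $(\widetilde T(r_0)f)(r) = f(r r_0)$, i.e. by the (measure-preserving up to the modular factor, but here $d^*r$ is invariant under $r\mapsto rr_0$) translation action of ${\mathbb R}^*_+$ on the base. If $A$ lies in the full commutant of $\widetilde T$, then in particular $A$ commutes with all these translations; conjugating the multiplication operator $M_\lambda$ by $\widetilde T(r_0)$ replaces $\lambda(r)$ by $\lambda(r r_0)$, so the commutation relation forces $\lambda(r r_0) = \lambda(r)$ for almost every $r$, for every $r_0 \in {\mathbb R}^*_+$. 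Since the multiplicative group ${\mathbb R}^*_+$ acts ergodically on itself with respect to Haar measure, $\lambda$ is constant a.e., hence $A$ is a scalar. This shows $\widetilde T$ is irreducible.

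The step where the hypotheses are genuinely used — and the main obstacle to watch — is the application of the direct-integral decomposition theory of the commutant: for the conclusion that a $G_0$-intertwiner of $\int T_r\, d^*r$ is decomposable one wants the field $\{T_r\}$ to be measurable and the standard Effros/von Neumann machinery to apply, and crucially one wants that the $T_r$ being \emph{pairwise inequivalent} guarantees there is no ``extra'' intertwining between distinct fibers — it is precisely this that pins the commutant down to the diagonal multiplication algebra rather than something larger (e.g. if some $T_r \cong T_{r'}$ on a positive-measure set one could get off-diagonal intertwiners and the argument would break). I would therefore be careful to invoke the hypothesis of pairwise nonequivalence at exactly the point where I claim the commutant of $\widetilde T|_{G_0}$ is abelian and equal to $L^\infty(m)$; after that the ergodicity argument with the ${\mathbb R}^*_+$-action is routine. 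One should also note in passing that the almost-invariant vector $h$ and the cocycle play no role here: irreducibility is a statement purely about the unitary representation $\widetilde T$, so the summability hypothesis enters only through the construction that makes $\widetilde T$ well-defined and special.
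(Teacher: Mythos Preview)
Your argument is correct, and it is exactly the approach the paper has in mind. The paper in fact gives no explicit proof of this theorem, but the identical two-step scheme---showing that the pairwise nonequivalence and irreducibility of the fiber representations force any $G_0$-intertwiner to be multiplication by a scalar function $\lambda(r)$, and then using the ergodic action of ${\mathbb R}^*_+$ on itself to force $\lambda$ constant---is carried out verbatim (in the current-group setting) in the proof of Theorem~\ref{THM:48-32}, which cites the present theorem as input. Your care in locating where the pairwise-nonequivalence hypothesis is actually used (to rule out off-diagonal intertwiners in the direct-integral commutant) is more explicit than anything the paper says.
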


Sometimes the representations $T_r$ are automatically pairwise
nonequivalent; this is the case in our further examples.

\subsection{Special representations of nilpotent Lie groups and their
one-dimensional extensions}
\label{sect:03}

Here we introduce a class of nilpotent Lie groups $N$ having special
representations. Every group in this class can be embedded into the
direct product $G={\mathbb R\,}^*_+\rightthreetimes N$ and is nice in the
sense of Definition~\ref{DEF:41-2}. Note that all special
representations of these groups are reducible, since irreducible
representations of nilpotent groups have no nontrivial 1-cocycles.

\subsubsection{General definitions}
Recall that a Lie algebra $L$ is called nilpotent of class~$n$
if its lower central series
$L=L_1 \supset L_2 \supset \cdots \supset L_n \supset  \cdots$, where
$L_{i+1}=[L,L_i]$, terminates at the $n$th step, i.e.,
$L_n\ne 0$ and $L_{n+1}=0$. We will say that an element
$a\in L$ is of degree
$r(a)=k$ if $a\in L_k \setminus  L_{k+1}$.

The space $L$ of such an algebra can be written as the direct sum of
nonzero vector spaces
\begin{equation}{}\label{11-1}
L= \bigoplus_{k=1}^n R_k,\quad\text{where}\quad  R_k \subset L_k,  \quad
L_k=R_k \oplus L_{k+1}.
\end{equation}
Here {\it every nonzero element of every space $R_k$ has degree
$k$.}

A nilpotent Lie algebra of class~$n$ is called {\it graded} if there
exists a decomposition of the form~\eqref{11-1} such that
\begin{equation*}{}
[R_i,R_j ] \subset R_{i+j}; \quad R_k=0\quad\text{for}\quad k>n.
\end{equation*}
Each such decomposition is called a {\it grading} of the
algebra $L$. In particular, all nilpotent algebras of class~$1$
(abelian algebras) and class~$2$ (metabelian algebras) are graded
algebras.

The following properties are obvious:
\begin{enumerate}
\item Every grading of a graded algebra $L$ is uniquely determined by
a subspace $R_1 \subset L$ such that $L=R_1 \oplus [L,L]$; the
subspaces $R_k$ of this grading can be defined by induction on $k$ via
the formula $R_k=\sum_{i+j=k} [R_j,R_j]$.
\item Every quotient algebra $L/L_k$ of a graded algebra $L$ is also
graded; if
$L= \bigoplus _{i=1}^n R_i$ is a grading of $L$, then
$L/L_k=  \bigoplus_{i=1}^{n-k}R_i/L_k$
is a grading of $L/L_k$.
\end{enumerate}

Every grading~\eqref{11-1} of a graded Lie algebra $L$ of class~$n$
allows one to realize $L$ as the space of sequences
$a=(a_1, \dots ,a_n)$, $a_k\in R_k$, with the multiplication law
\begin{equation}{}\label{11-3}
[a,b]=(0,c_2, \dots ,c_n),\quad\text{where}\quad c_k=\sum_{i+j=k} [a_i,b_j].
\end{equation}

{\it A nilpotent Lie algebra $L$ of class $n$ is a graded algebra if and only if
on $L$ there exists a diagonalizable differentiation operator $D$ with
eigenvalues $1, \dots ,n$.} Namely, the spaces $R_k$ can be
defined as the eigenspaces of this operator.

\begin{PROP}{}\label{PROP:31-2} Every differentiation operator
$D$ with these properties gives rise to a one-dimensional extension of the graded algebra
$L$ to the solvable algebra
$L \oplus \{D\}$ in which the commutation relations on $L$ are
supplemented by the relations
\begin{equation}{}\label{11-5}
[D,D]=0; \quad [D,a]=-[a,D]=Da\quad\text{for}\quad a\in L.
\end{equation}
\end{PROP}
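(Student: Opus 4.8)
The plan is to verify directly that the bracket operations defined by~\eqref{11-5}, together with the given Lie bracket on $L$, satisfy the axioms of a Lie algebra on the vector space $L \oplus \{D\}$; that is, bilinearity, antisymmetry, and the Jacobi identity. Bilinearity and antisymmetry are immediate from the prescription $[D,a] = Da$, $[a,D] = -Da$, $[D,D]=0$, combined with the fact that $D$ is a linear operator and that $L$ is already a Lie algebra. So the content of the proposition is entirely in the Jacobi identity, and this is where I would concentrate the argument.

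First I would observe that the Jacobi identity $[x,[y,z]] + [y,[z,x]] + [z,[x,y]] = 0$ holds automatically when all three of $x,y,z$ lie in $L$, since $L$ is a Lie algebra, and holds trivially when two or more of them equal $D$ (any term containing $[D,D]=0$ drops out, and one checks the remaining terms cancel by antisymmetry and the linearity of $D$). The only genuinely new case is $x = D$, $y = a$, $z = b$ with $a,b \in L$. Here the identity reads
\begin{equation*}
[D,[a,b]] + [a,[b,D]] + [b,[D,a]] = 0,
\end{equation*}
which, after substituting $[b,D] = -Db$ and $[D,a] = Da$, becomes
\begin{equation*}
D[a,b] - [a,Db] + [b,Da] = 0,
\end{equation*}
i.e. $D[a,b] = [Da,b] + [a,Db]$. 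But this is precisely the statement that $D$ is a derivation of $L$, which is part of the hypothesis. Hence the Jacobi identity holds in all cases.

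The remaining point is to note that the solvability of $L \oplus \{D\}$ follows formally: $L$ is an ideal (the relations in~\eqref{11-5} show $[D,L] \subseteq L$, and $[L,L]\subseteq L$), the quotient $(L\oplus\{D\})/L$ is one-dimensional, hence abelian, and $L$ is nilpotent, hence solvable; an extension of a solvable algebra by a solvable ideal is solvable. I expect no real obstacle here — the only place where the hypothesis on $D$ is used in an essential way is the derivation property entering the mixed Jacobi identity, and I would make sure to flag that the diagonalizability of $D$ with eigenvalues $1,\dots,n$ is not needed for this proposition (only that $D$ is a derivation), so that the construction applies to the given operator verbatim.
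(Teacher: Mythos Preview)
Your argument is correct: verifying the Jacobi identity in the mixed case reduces exactly to the derivation property of $D$, and solvability follows since $L$ is a nilpotent ideal with one-dimensional abelian quotient. The paper, however, gives no proof of this proposition at all; it is stated as a standard fact (this is just the semidirect product $L \rtimes \mathbb{R}D$ of a Lie algebra by a one-dimensional algebra acting via a derivation), and the text moves on immediately to a remark. So your write-up is already more detailed than what the authors supply, and your closing observation --- that only the derivation property is used, not the particular eigenvalue structure of $D$ --- is a fair sharpening.
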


\begin{REM*}{} Examples of nongraded algebras are characteristically
nilpotent algebras, in which all differentiation operators are
nilpotent.
\end{REM*}

A Lie group $N$ is called a nilpotent (graded) group of class~$n$ if its
Lie algebra $L$ is a nilpotent (graded) algebra of class~$n$. It
is known that a simply connected nilpotent group~$N$ is uniquely
determined by its Lie algebra $L$, and there exists a diffeomorphism
$\exp:L\to N$, the exponential map, between the manifolds of this group
and its Lie algebra.

If $N$ is a simply connected graded Lie group of class~$n$ and
\eqref{11-1} is a fixed grading on its Lie algebra $L$, then
one can choose the coordinates of the vector space
$R= \bigoplus _{k=1}^n R_k$  as
coordinates on $N$. We will call them the {\it canonical
coordinates} on $N$. Thus elements
$a\in N$, where $N$ is a one-dimensional graded nilpotent Lie group of
class~$n$, are sequences of the form
$a=(a_1, \dots ,a_n)$ with $a_k\in R_k$.

\begin{THM}{}\label{THM:11-20} In the canonical coordinates, the product of elements of a simply
connected graded nilpotent Lie group $N$ of class~$n$ has the following
form:
\begin{equation}{}\label{11-20}
a \cdot b=(a_1+b_1, a_2+b_2+p_2(a,b)), \dots ,  a_n+b_n+p_n(a,b)),
\end{equation}
where $p_k(a,b)\in R_k$ are polynomials in $a_1, \dots ,a_{k-1}$ and
$b_1, \dots ,b_{k-1}$ satisfying the condition $p_k(a,0)=p_k(0,b)=0$.
In particular, $a \cdot b=a+b+\frac 12 [a,b]$ for $n=2$.
\end{THM}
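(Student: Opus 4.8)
The plan is to use the Baker--Campbell--Hausdorff formula, which for a simply connected nilpotent group gives the group law on $N$ via $\exp a \cdot \exp b = \exp\bigl(a+b+\tfrac12[a,b]+\cdots\bigr)$, a \emph{finite} sum because $L$ is nilpotent of class $n$. The terms beyond $a+b$ are iterated brackets of $a$ and $b$ of length $\ge 2$, so they lie in $L_2=[L,L]$; this already gives the first coordinate $a_1+b_1$ with no correction. The substance of the theorem is the precise dependence of the higher corrections $p_k$ on the lower-index coordinates, and here the grading is exactly what is needed.

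First I would set up the canonical coordinates: write $a=(a_1,\dots,a_n)$ with $a_k\in R_k$, identify $a$ with $\sum_k a_k\in L$, and recall from Theorem~\ref{THM:11-20}'s hypotheses that $[R_i,R_j]\subset R_{i+j}$. Then I would expand the BCH series: every term is a linear combination of brackets $[x_{i_1},[x_{i_2},[\cdots]]]$ where each $x_{i}$ is one of the summands $a_p$ or $b_q$. A bracket involving summands of degrees $p_1,\dots,p_m$ (with $m\ge 2$) lands, by the grading property, in $R_{p_1+\cdots+p_m}$. To land in $R_k$ we need $p_1+\cdots+p_m=k$ with each $p_i\ge 1$ and $m\ge 2$; hence each individual $p_i\le k-1$. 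This is the key step: it shows the $R_k$-component of the product, namely $a_k+b_k+p_k(a,b)$, has its correction term $p_k$ expressible purely in terms of $a_1,\dots,a_{k-1}$ and $b_1,\dots,b_{k-1}$. That $p_k$ is a polynomial is automatic since the BCH series is finite and each term is multilinear in the $x_i$.

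The normalization $p_k(a,0)=p_k(0,b)=0$ follows from the group axioms: $a\cdot 0=a$ and $0\cdot b=b$ force the correction to vanish when one argument is the identity (equivalently, every BCH term of bracket-length $\ge 2$ contains at least one $a_p$ and at least one $b_q$, so it vanishes if all the $a$'s or all the $b$'s are zero). Finally, for $n=2$ the lower central series terminates at $L_2$, so all iterated brackets of length $\ge 3$ vanish, and BCH collapses to $a\cdot b=a+b+\tfrac12[a,b]$; this also shows $p_2(a,b)=\tfrac12[a_1,b_1]$.

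I expect the only real obstacle to be bookkeeping: stating cleanly that the BCH series is finite on a class-$n$ nilpotent algebra and that each of its terms is an iterated bracket, then combining this with the grading inclusion $[R_i,R_j]\subset R_{i+j}$ to pin down which bracket lands in which $R_k$. No delicate estimate or convergence issue arises; once the degree-counting observation ($p_1+\cdots+p_m=k$, $m\ge2$ $\Rightarrow$ each $p_i<k$) is in place, the statement follows by collecting terms degree by degree.
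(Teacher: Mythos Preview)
Your argument is correct. The paper actually states this theorem without proof, so there is nothing to compare against; your use of the Baker--Campbell--Hausdorff formula together with the grading condition $[R_i,R_j]\subset R_{i+j}$ is the standard route, and your degree-counting observation (each summand degree in a length-$m\ge 2$ bracket landing in $R_k$ is at most $k-1$) is exactly the point that makes $p_k$ depend only on $a_1,\dots,a_{k-1},b_1,\dots,b_{k-1}$. The paper's convention that the canonical coordinates come from the diffeomorphism $\exp:L\to N$ and the stated $n=2$ formula $a\cdot b=a+b+\tfrac12[a,b]$ confirm that exponential coordinates of the first kind are intended, so your identification $a\leftrightarrow\sum_k a_k$ is the right one.
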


Given a graded Lie algebra $L$, the action of the differentiation
$D$ on $L$ (i.e., $Da_k=ka_k$ for elements $a_k$ of degree~$k$)
generates an action of the group of automorphisms ${\mathbb R\,}^*_+$ on the
graded group $N= \bigoplus_{k=1}^n R_k$ defined as $a_k \mapsto
r^ka_k$ for $a_k\in R_k$.  Thus $N$ can be extended to a solvable
group $G={\mathbb R\,}^*_+\rightthreetimes N$.

\begin{REM*}{} By a general property of solvable Lie groups, the group
$G={\mathbb R\,}^*_+\rightthreetimes N$ can be realized as a subgroup of the
group of lower triangular matrices of a certain order~$m$. In this
realization, the group
${\mathbb R\,}^*_+$ turns into a one-dimensional subgroup of the group of diagonal
matrices.
\end{REM*}

\subsubsection{Summability of the unitary representations; main theorem}

Recall that G\r{a}rding's space of the unitary representation $T$ of the
Lie group $G$ in the Hilbert space $H$ is the subspace $H^* \subset
H,$ whch consists of all vectors $h\in H$ with property:
 ${\langle T(g)h,h' \rangle} \in C^\infty (G) $ for all $h'\in H.$ According to the theorem by
 Gel'fand-G\r{a}rding, the subspace $H^*$ is dense in $H$.

\begin{THM}{}\label{THM:43-24} Let $N$ be an arbitrary connected and simply connected nilpotent
graded Lie group and let $T$ be an arbitrary unitary representation of
the group $N$ in a Hilbert space $H$.
Suppose $H$ does not contain invariant vectors. 
Then each vector from the G\r{a}rding
space $H^* \subset H$ is almost invariant under $T$,
thus the representation $T$ is summable.
\end{THM}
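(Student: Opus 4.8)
The plan is to estimate $\|T_r(g)h - h\|$ for $h$ in the Gårding space and show that the resulting function of $r$ is integrable with respect to $dm(r)=e^{-u(r)/2}d^*r$ near $r=0$ and near $r=\infty$. The key structural fact is that conjugation by the automorphism $r$ contracts each graded piece $R_k$ by the factor $r^k$: writing $g=\exp(a)$ with $a=(a_1,\dots,a_n)$ in canonical coordinates, one has $g^r=\exp(a_1 r, a_2 r^2, \dots, a_n r^n)$. Hence as $r\to 0$ the element $g^r$ tends to the identity, and the whole point is to get a \emph{quantitative} rate.

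First I would reduce to a neighborhood of the identity: since $N$ is generated by any neighborhood of $e$, and a $1$-cocycle bound is subadditive under the cocycle identity $\beta(g_1g_2)=T_r(g_1)\beta(g_2)+\beta(g_1)$ (so $\|T_r(g_1g_2)h-h\|\le \|T_r(g_2)h-h\|+\|T_r(g_1)h-h\|$), it suffices to prove \eqref{2-2} for $g$ in a fixed compact neighborhood of $e$. Second, I would use the Gårding property: for $h\in H^*$, the matrix coefficient $g\mapsto\langle T(g)h,h\rangle$ is $C^\infty$ on $N$, so in particular
\[
\|T(g)h-h\|^2 = 2\|h\|^2 - 2\,\mathrm{Re}\,\langle T(g)h,h\rangle
\]
is a $C^\infty$ function of $g$ vanishing at $g=e$; by Taylor's theorem in the canonical coordinates it is $O(\sum_k |a_k|)$ uniformly on a compact neighborhood, hence for $g^r$ with $r\le 1$ we get $\|T(g^r)h-h\|^2 = O(r)$, i.e. $\|T_r(g)h-h\| = O(r^{1/2})$ as $r\to 0$. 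This is exactly the strong estimate \eqref{2-11} with $\alpha=1/2$, which controls the integral near $0$ once multiplied by $e^{-u(r)/2}\le 1$. For the tail $r\to\infty$, one uses only the trivial bound $\|T_r(g)h-h\|\le 2\|h\|$ together with the hypothesis $\int_\epsilon^\infty e^{-u(r)/2}d^*r<\infty$; thus the product is integrable there as well, and \eqref{2-2} holds for all $g$. Finally, the no-invariant-vectors hypothesis is part of the summability requirement in Definition~\ref{DEF:41-2}, so $T$ is summable and the conclusion follows.

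I expect the main obstacle to be making the local Taylor estimate genuinely uniform. The naive argument differentiates the smooth matrix coefficient once; but a clean way to see $\|T(g)h-h\|^2=O(\rho(g))$ (where $\rho$ is a quasi-norm homogeneous of degree $k$ on $R_k$) is to note that the first-order term in the Taylor expansion of $g\mapsto\|T(g)h-h\|^2$ at $e$ vanishes (since this function is nonnegative and vanishes at $e$, its differential is zero), so the leading contribution is quadratic in the coordinates $a_i$; but $a_i\mapsto a_i r^i$, so the term quadratic in $a_1$ only contributes $O(r^2)$, and even the cross/linear-looking behavior is controlled — in fact one gets $\|T_r(g)h-h\|=O(r)$, better than needed. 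The only care required is that the Gårding vector gives smoothness of the matrix coefficient on all of $N$, not just formally; this is exactly the Gel'fand–Gårding theorem quoted before the statement. One should also check that the $O(\cdot)$ constants can be taken uniform over $g$ in a compact neighborhood of $e$, which follows from continuity of the relevant derivatives of the matrix coefficient. With these points in place the estimate near $0$ and the crude bound near $\infty$ combine to give summability.
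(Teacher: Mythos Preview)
Your proposal is correct and follows essentially the same route as the paper: use that for $h\in H^*$ the matrix coefficient $g\mapsto\langle T(g)h,h\rangle$ is $C^\infty$, substitute $g^r=(ra_1,\dots,r^na_n)$, and Taylor-expand at $e$ to obtain $\|T_r(g)h-h\|^2=2(1-\opn{Re}\langle T_r(g)h,h\rangle)=O(r)$, i.e.\ the estimate~\eqref{2-11} with $\alpha=\tfrac12$; the paper's proof is exactly this computation, written in three lines. Your reduction to a compact neighborhood of $e$ via the cocycle subadditivity is unnecessary, since the almost-invariance condition~\eqref{2-22} is required only pointwise in $g$ and the Taylor estimate already holds at every fixed $g$; on the other hand, your explicit treatment of the tail $r\to\infty$ (trivial bound plus $\int_\epsilon^\infty e^{-u(r)/2}d^*r<\infty$) fills in a step the paper leaves implicit.
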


\begin{proof}{} Let $N\cong \bigoplus_{k=1}^n R_k $, $a_k\in R_k$ be a fixed graduation
of the group $N$ and $a=(a_1, \dots ,a_n)$ -- associated with it
canonical system of coordinates on $N$.
Consider any vector $h\in H^*,$ $\|h\|=1.$
Thus
$$
{\langle T(a)h,h \rangle}=f(a_1, \dots ,a_n)\in C^\infty (N), \quad
f(0)=1\quad \forall \quad  a\in N.
$$
Consequently, because of $T_r(a)=T(a^r)=T(ra_1, \dots ,r^na_n),$ we obtain:
$$
{\langle T_r(a)h,h \rangle}=1+O(r) \quad\text{for small}\quad r.
$$
It means that, $\|T_r(a)h -h\|^2\equiv 2(1-\opn{Re} {\langle
T_r(a)h,h \rangle})=O(r).$ Consequently, the vector $h$ is almost
invariant with respect to the representation $T.$
\end{proof}

\begin{COR*}{}
With every unitary irreducible representation $T$ of the group $N$
is associated the special irreducible representation $\widetilde T$ of the group
$G={\mathbb R\,}^*_+\rightthreetimes N$.
\end{COR*}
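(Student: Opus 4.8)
The plan is to deduce the statement from Theorem~\ref{THM:43-24} and Theorem~\ref{THM:31-36}; the only point that needs a genuine argument is the hypothesis of Theorem~\ref{THM:31-36}, namely that the conjugate representations $T_r$, $r\in{\mathbb R\,}^*_+$, of the group $N$ are pairwise nonequivalent. Let $T$ be an irreducible unitary representation of $N$. If $T$ is trivial it has invariant vectors, hence is not summable, and there is nothing to assert; so assume $T$ nontrivial. The $T(N)$-invariant vectors in $H$ form a closed $T(N)$-invariant subspace, which by irreducibility is $0$ or $H$; it cannot be $H$ since $T\ne 1$, so it is $0$. By Theorem~\ref{THM:43-24} the representation $T$ is then summable, the $T_r$ form a canonical system, and by Theorems~\ref{THM:31-33} and~\ref{THM:31-34} the associated representation $\widetilde T$ of $G={\mathbb R\,}^*_+\rightthreetimes N$ is special. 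Granting the pairwise nonequivalence of the $T_r$, Theorem~\ref{THM:31-36} then yields that $\widetilde T$ is irreducible, which is the assertion.

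It remains to prove that $T_r\not\cong T_s$ for $r\ne s$. Let $\delta_t\in\opn{Aut}(N)$ be the dilation $a\mapsto a^t$; since $\delta_t\circ\delta_{t'}=\delta_{tt'}$, the relation $T_r=T\circ\delta_r\cong T\circ\delta_s=T_s$ is equivalent (compose both sides with $\delta_s^{-1}$) to $T\circ\delta_{r/s}\cong T$, so it suffices to show that $T\circ\delta_t\cong T$ with $t>0$ forces $t=1$. I would prove this by induction on the nilpotency class $n$ of $N$. The subgroup $\exp R_n$ equals $\exp L_n$ and is therefore central in $N$, so by Schur's lemma $T(\exp Z)=\chi(Z)\,I$ for $Z\in R_n$, where $\chi:R_n\to{\mathbb T}$ is a continuous character; since $\delta_t(\exp Z)=\exp(t^nZ)$, the representation $T\circ\delta_t$ acts on $\exp R_n$ by the character $Z\mapsto\chi(t^nZ)$. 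An equivalence $T\circ\delta_t\cong T$ intertwines these scalar actions, hence $\chi(t^nZ)=\chi(Z)$ for all $Z\in R_n$. If $\chi\ne 1$, choose $Z_0\in R_n$ with $\chi(Z_0)\ne 1$; then $s\mapsto\chi(sZ_0)$ is a nontrivial character $e^{ics}$ of ${\mathbb R\,}$ with $c\ne 0$, and $e^{ict^{n}s}=e^{ics}$ for all $s$ forces $ct^n=c$, hence $t^n=1$ and $t=1$. If $\chi=1$, then $T$ is trivial on $\exp R_n$, hence factors as $T=\bar T\circ p$ through the projection $p:N\to\bar N:=N/\exp R_n$; here $\bar N$ is connected, simply connected and nilpotent of class $n-1$, its Lie algebra $L/L_n$ is graded (quotients of graded algebras are graded), the dilation $\delta_t$ preserves $\exp R_n$ and induces the dilation $\bar\delta_t$ of $\bar N$, and $\bar T$ is again irreducible and nontrivial; from $T\circ\delta_t\cong T$ one gets $\bar T\circ\bar\delta_t\cong\bar T$, and the induction hypothesis gives $t=1$. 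The base case $n=1$ ($N$ abelian, $T$ a nontrivial character) is exactly the sub-case $\chi\ne 1$.

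The entire weight of the argument sits in this last step, the pairwise inequivalence of the $T_r$; everything else is a direct combination of theorems already established. The same step also has a quick proof via Kirillov's orbit method: the highest nonzero graded component of a functional $\xi\in L^{*}$ is a coadjoint invariant (because $\opn{ad}^{*}(X)$ strictly lowers the degree for $X\in R_j$ with $j\ge 1$), whereas passing from $T$ to $T\circ\delta_t$ rescales the component in $R_k^{*}$ by a nonzero power of $t$; hence the coadjoint orbit attached to $T\circ\delta_t$ differs from that attached to $T$ unless $t=1$. The inductive argument is preferable only in that it uses nothing beyond Schur's lemma and the structural facts recalled in the text.
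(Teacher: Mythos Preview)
Your argument is correct and, in fact, strictly more complete than what the paper offers. The paper states this corollary immediately after Theorem~\ref{THM:43-24} with no proof at all; the intended justification is simply to combine Theorem~\ref{THM:43-24} (summability) with Theorem~\ref{THM:31-36} (irreducibility of $\widetilde T$), taking the pairwise nonequivalence of the $T_r$ for granted. Indeed, right after Theorem~\ref{THM:31-36} the paper remarks only that ``sometimes the representations $T_r$ are automatically pairwise nonequivalent; this is the case in our further examples,'' without ever supplying a general argument for graded nilpotent $N$.

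You fill this gap with a clean induction on the nilpotency class via the central character on $\exp L_n$, and you also indicate the Kirillov-orbit alternative (the top graded component of $\xi\in L^{*}$ is a coadjoint invariant but is rescaled nontrivially by $\delta_t$). Both arguments are valid; the inductive one has the advantage, as you note, of using only Schur's lemma and the structural facts already recalled in Section~\ref{sect:03}, while the orbit-method version is quicker but imports Kirillov's correspondence. Your handling of the trivial representation (excluded because it fails the no-invariant-vector hypothesis of summability) is also a point the paper leaves implicit.
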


\subsection{The special representation of the group ${\mathbb R\,}^*_+\rightthreetimes N$
associated with the regular representation of $N$}
The regular representation of a
locally compact group $G$ is the unitary representation of $G$ in
the Hilbert space $L^2(G,dg)$, where $dg$ is the right-invariant
measure on $G$, by right translations: $(T(g_0 f)(g))=f(gg_0)$.

Let
us describe the structure of the regular representation of a
 connected, simply connected
nilpotent graded Lie group $N$ of class~$n$. Let
$$
N= \bigoplus_{i=1}^n R_i
$$
be a fixed grading of $N$. With this grading, elements of
$N$ can be written as sequences
$a=(a_1, \dots ,a_n)$,
$a_i\in R_i$, and the right-invariant measure on $N$ is
$da=da_1 \cdots da_n$, where $da_i$ is the Lebesgue measure on the
vector space $R_i$.

In our notation, the regular representation $T$ of the group $N$ is
a representation in the Hilbert space $K=L^2(N,da)$ of functions
$f(a)=f(a_1,  \dots,  a_n)$ with the norm $\|f\||^2=\int_N |f(a)|^2
da$. The operators of this representation are defined by the formula
$$
(T(b)f)(a)=f(a \cdot b), \quad  b\in N,
$$
where $a \cdot b$ is given by~\eqref{11-20}.

The operators of the representations $T_r$ conjugate to $T$ have the
form
$$
(T_r(b)f)(a)=f(a \cdot b^r), \quad  b\in N,\quad\text{where}\quad
b^r=(rb_1,r^2b_2, \dots ,r^nb_n).
$$

It is clear that the space $K$ does not contain vectors invariant
under the operators of~$T$,
but,
by Theorem~\ref{THM:43-24}
it contain almost invariant vectors.

Let us describe a family of almost invariant vectors $F_{\mu}$ in the space $K$
of the representation $T$. Let $\mu = (\mu_1, \dots ,\mu_n )$ be an arbitrary sequence
of positive definite quadratic forms $\mu_i: R_i \to {\mathbb R}$.
We assign to 
$\mu$ a function $F_\mu$ on $N:$
\begin{equation}{}\label{1-3-1}
F_{\mu}(a) = \exp (-\mu(a)) = \exp(-\sum_{i=1}^n \mu_i(a_i)),
\end{equation}
where $a=(a_1, \dots ,a_n)$, $a_i \in R_i$.
It is evident that $F_\mu \in K$,
and the linear space spanned by such vectors is
dense in $K$.

\begin{THM}{}\label{THM:6-61} All vectors $F_\mu$ of the form~\eqref{1-3-1}
are almost invariant in the space of the regular representation of
the graded nilpotent group $N$.
\end{THM}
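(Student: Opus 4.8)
\emph{Proof plan.} The plan is to derive the theorem from the single local asymptotic bound
$$
\|T_r(b)F_\mu-F_\mu\|_K=O(r)\qquad\text{as }r\to0^{+},
$$
valid for each fixed $b\in N$, together with the trivial global bound $\|T_r(b)F_\mu-F_\mu\|_K\le 2\|F_\mu\|_K$. First I would check that these two facts already give what is wanted: for the family $\{T_r\}$ conjugate to the regular representation, splitting the summability integral~\eqref{2-22} at $r=1$ gives $\int_0^1\|T_r(b)F_\mu-F_\mu\|_K\,e^{-u(r)/2}\,d^*r\le C\int_0^1 r\cdot r^{-1}\,dr<\infty$ near the origin (using $e^{-u(r)/2}\le1$), and $\int_1^\infty\|T_r(b)F_\mu-F_\mu\|_K\,e^{-u(r)/2}\,d^*r\le 2\|F_\mu\|_K\int_1^\infty e^{-u(r)/2}\,d^*r<\infty$ by the standing hypothesis on $u$. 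Since the $F_\mu$ span a dense subspace of $K$, this simultaneously recovers a dense set of almost invariant vectors for the regular representation, consistently with Theorem~\ref{THM:43-24}. So everything reduces to the local bound.

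There are two natural ways I would obtain it. The quick route: in the canonical coordinates $a=(a_1,\dots,a_n)$ the function $F_\mu$ of~\eqref{1-3-1} is a Gaussian, hence lies in the Schwartz space $\mathcal S(N)\cong\mathcal S(\mathbb R^{\dim N})$; the infinitesimal operators of the regular representation act by left-invariant vector fields, which in these coordinates are differential operators with polynomial coefficients and therefore map $\mathcal S(N)$ into itself, so every iterated derived-representation image of $F_\mu$ again lies in $\mathcal S(N)\subset K$. Hence $F_\mu$ is a smooth vector, $g\mapsto\langle T(g)F_\mu,h'\rangle$ is smooth for every $h'\in K$, i.e.\ $F_\mu$ lies in the G\r{a}rding space $K^{*}$, and Theorem~\ref{THM:43-24} applies directly (the regular representation of the noncompact group $N$ having no nonzero invariant vectors). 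The self-contained route repeats the computation in the proof of Theorem~\ref{THM:43-24}: put $f(b)=\langle T(b)F_\mu,F_\mu\rangle=\int_N F_\mu(a\cdot b)\,F_\mu(a)\,da$, note that by the polynomial group law~\eqref{11-20} the exponent $\mu(a\cdot b)+\mu(a)$ is a polynomial in $(a,b)$, and that positive-definiteness of the forms $\mu_i$ furnishes, locally uniformly in $b$, the integrable majorant $\bigl|\partial_b^\alpha e^{-\mu(a\cdot b)-\mu(a)}\bigr|\le P_\alpha(a)\,e^{-c|a|^2}$ for suitable polynomials $P_\alpha$; thus one may differentiate under the integral sign, getting $f\in C^\infty(N)$ with $f(0)=\|F_\mu\|_K^2$. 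Since $\|b^r\|\le r\|b\|$ for $r\le1$, it follows that $\langle T_r(b)F_\mu,F_\mu\rangle=f(b^r)=\|F_\mu\|_K^2+O(r)$, whence $\|T_r(b)F_\mu-F_\mu\|_K^2=2\bigl(\|F_\mu\|_K^2-\opn{Re}f(b^r)\bigr)=O(r)$. (A more hands-on variant writes $F_\mu(a\cdot b^{r})-F_\mu(a)=\int_0^1\frac{d}{dt}F_\mu(a\cdot b^{tr})\,dt$, extracts a factor $r$ using $p_k(a,0)=0$ from~\eqref{11-20}, and applies the measure-preserving substitution $a\mapsto a\cdot b^{tr}$; this gives the sharper $O(r^2)$ for the squared norm.)

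The one step that is more than bookkeeping — the main obstacle — is justifying the passage of the derivative (or of the limit $r\to0$) inside the integral over $N$, i.e.\ producing an integrable majorant independent of $r$ and, in the mean-value variant, of the interpolation parameter $t\in[0,1]$. This is exactly where positive-definiteness of the quadratic forms $\mu_i$ is indispensable: it forces $F_\mu$ and all of its partial derivatives to decay like $e^{-c|a|^2}$ in every direction, which dominates the polynomial factors produced by the group-law coordinates $p_k(a,b)$ in~\eqref{11-20} and guarantees convergence of the integrals $\int_N|\partial^\alpha F_\mu(a)|\,P(a)\,da$. Once this majorant is secured, either route closes the argument.
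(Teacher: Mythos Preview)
Your argument is correct, but the route differs from the paper's. The paper proves the key estimate $\langle T_r(b)F_\mu,F_\mu\rangle=1+O(r^2)$ by induction on the nilpotency class~$n$: writing $F_n=F_{n-1}\,e^{-\mu_n(a_n)}$, it performs the Gaussian integral over~$a_n$ explicitly (producing the factor $e^{-\frac{r^2}{2}\mu_n(q_n(a))}$) and then invokes the inductive hypothesis for $F_{n-1}$ on the quotient $N/N_n$. You instead recognize $F_\mu$ as a Schwartz function, hence a smooth (G\r{a}rding) vector for the regular representation, and either appeal directly to Theorem~\ref{THM:43-24} or re-run its argument with a dominated-convergence majorant. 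Your route is shorter and more conceptual --- it uses nothing about the Gaussian shape of $F_\mu$ beyond membership in $\mathcal S(N)$ --- while the paper's explicit induction yields the sharper $O(r^2)$ for $1-\opn{Re}\langle T_r(b)F_\mu,F_\mu\rangle$, which is the rate that matches the cocycle formulas appearing later. One small slip: your announced target $\|T_r(b)F_\mu-F_\mu\|_K=O(r)$ is not what your ``self-contained'' route actually delivers (it gives $O(r)$ only for the \emph{squared} norm, i.e.\ $O(r^{1/2})$ for the norm); this is already enough for~\eqref{2-22}, and your mean-value variant does recover the full $O(r)$.
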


\begin{proof}{} We fix a sequence $\mu = (\mu_1, \dots ,\mu_n )$.
Let $F_k(a) = \exp(- \sum_{i=1}^k \mu_i(a_i))$, $k=1, \dots ,n$.
In particular, $F_n(a)=F_\mu(a)$.
Consider $F_k(a)$ as a function on the group $N/N_{k+1}$, where $N_{i+1}=[N,N_i]$.
Then $F_k(a)$ lies in $H_k$, the space of the regular representation of
$N/N_{k+1}$.

We may assume that the Lebesgue measures
$da_i$ on $R_i$ are normalized so that
$\int_{R_i} e^{-2\mu_i(a_i)} da_i=1$. Then
$\|F_k\|=1$, and the almost invariance condition~\eqref{2-22} for the
vector~$F_k\in H_k$ with respect to the measure
$dm(r)=e^{-\frac {u(r)}{2}}d^*r$ is equivalent to
\begin{equation}{}\label{13-5}
\int_0^1 (1-{\langle T_r(b)F_k,F_k \rangle})^{1/2} d^*r< \infty
\qquad (k=1, \dots ,n).
\end{equation}
We will prove, by induction on the class~$n$, the following estimate:
\begin{equation}{}\label{113-5}
{\langle T_r(b)F_n,F_n \rangle}=1+O(r^2)\quad\text{on}\quad (0,1).
\end{equation}
Obviously, condition~\eqref{13-5} immediately follows from this estimate.

First note that the representations
$T_r$, $r\in{\mathbb R\,}^*_+$, of $N$ conjugate to the regular
representation $T$ have the form
$$
(T_r(b)f)(a)=f(a \cdot b^r), \quad  b\in N,\quad\text{where}\quad
b^r=(rb_1,r^2b_2, \dots ,r^nb_n).
$$
By Theorem~\ref{THM:11-20}, for every fixed $b$ this formula can be
written as
$$
(T_r(b)f)(a)=f(a_1+rb_1, a_2+rq_2(a,r,b),  \cdots ,a_n+rq_n(a,r,b)),
$$
where $q_k(a,r,b)\in R_k$ are polynomials in $a_1, \dots a_{k-1}$ and $r,b_1, \dots ,b_k $.
In what follows, instead of
$q_k(a,r,b)$ we will write $q_k(a)$ or
$q_k(a_1, \dots ,a_{k-1})$.

In the case $n=1$, we have
$$
{\langle T_r(a)F_1,F_1 \rangle}=\int_{R_1}e^{
-\mu_1(a_1+rb_1)-\mu_1(a_1)
} da_1
=e^{-\frac{r^2}{2} \mu( b_1 )}.
$$
Here $\langle \cdot,\cdot \rangle$ is the scalar product on $H_1$.
This immediately implies~\eqref{113-5} for $n=1$.

Let us prove~\eqref{113-5} for an arbitrary $n$ under the
assumption that it is proved for $n-1$. We  use the relation
$$
F_n(a_ 1, \dots ,a_n)=F_{n-1}(a_1, \dots ,a_{n-1})
e^{-\mu_n(a_n)}.
$$
It implies that
$$
{\langle T_r(b)F_n,F_n \rangle}=
\int_N \Bigl( (T_r(b)F_{n-1})(a)\,F_{n-1}(a)\,
e^{-\mu_n(a_n+rq_n(a))
-\mu_n(a_n)
}
\Bigr)da_1 \cdots da_n.
$$
Here $\langle \cdot,\cdot \rangle$ is the scalar product on $H_{n}$.
Integrating with respect to $a_n$ and observing that only
the last factor in the integrand depends on $a_`n$, we obtain
\begin{equation}{}\label{117-6}
{\langle T_r(b)F_n,F_n \rangle}=
\int_N {\langle T_r(b)F_{n-1}(a),F_{n-1}(a) \rangle})
e^{-\frac{r^2}{2}\mu_n( q_n(a))
}  da_1 \cdots da_{n-1}.
\end{equation}
Therefore,
\begin{equation}{}\label{17-6}
{\langle T_r(b)F_n,F_n \rangle}={\langle T_r(b)F_{n-1},F_{n-1} \rangle}-
r^2\int (Q(a){\langle T_r(b)F_{n-1}(a),F_{n-1}(a) \rangle}) da_1
\cdots da_{n-1},
\end{equation}
where $Q(a)$ is a bounded function of
$r$ and $a_1, \dots a_{n-1}$. Since
$Q(a)$ is bounded, the function
$I(r)=\int (Q(a){\langle T_r(b)F_{n-1},F_{n-1} \rangle}) da_1 \cdots da_{n-1}$
is bounded on $[0,1]$. Hence, in view of \eqref{17-6}, estimate~\eqref{13-5}
for $F_{n-1}$ implies the same estimate for $F_n$.
\end{proof}

\begin{COR*}{}
The special representation $\widetilde T$
of the group $G={\mathbb R\,}^*_+\rightthreetimes N$, associated
with regular representation $T$ of the group $N$, has a family of
nontrivial $1$-cocycles of type
$$
\beta_\mu (g,r)=(\widetilde T(g)f_\mu)(r)-f_\mu(r),\quad\text{where}\quad
f_\mu(r)=e^{-\frac r2} F_\mu 
$$
parametrized by sequences $\mu = (\mu_1, \dots ,\mu_n )$
of positive definite quadratic forms $\mu_i: R_i \to {\mathbb R}$.
By Theorem~\ref{THM:31-34} (corollary)
these cocycles $\beta_\mu$ are pairwise non-cohomologous.
\end{COR*}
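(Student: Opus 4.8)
The plan is to assemble the statement from results already in hand, with almost no new computation. First I would recall that, by Theorem~\ref{THM:6-61}, every function $F_\mu$ of the form \eqref{1-3-1} is an almost invariant vector in the space $K=L^2(N,da)$ of the regular representation $T$ of $N$; equivalently, $F_\mu$ lies in the subspace $K_0\subset K$ whose nonzero elements are exactly the almost invariant vectors (recall this set is linear, as noted after \eqref{2-2}). I would then fix once and for all the weight $u_0(r)=r$: it is smooth and positive on the half-line and $\int_\epsilon^\infty e^{-r/2}\,d^*r<\infty$, so it is admissible in the sense used throughout Section~\ref{sect:02}. With this $u_0$, Theorem~\ref{THM:31-33} provides the special representation $\widetilde T$ of $N$ in $\mathcal K=\int_0^\infty K_r\,d^*r$ together with the nontrivial $1$-cocycle attached to each $F_\mu$, and Theorem~\ref{THM:31-34} extends $\widetilde T$ to $G={\mathbb R\,}^*_+\rightthreetimes N$ via $(\widetilde T(r_0)f)(r)=f(rr_0)$ and shows that
\[
\beta_\mu(g,r)=(\widetilde T(g)f_\mu)(r)-f_\mu(r),\qquad f_\mu(r)=e^{-\frac{u_0(r)}{2}}F_\mu=e^{-\frac r2}F_\mu,
\]
is a nontrivial $1$-cocycle on $G$. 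By construction $\beta_\mu=v(F_\mu)$, where $v\colon K_0\to Z_0^1$ is the natural epimorphism attached to the fixed weight $u_0$, so in particular $\beta_\mu\in Z_0^1$.

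Second, for the pairwise non-cohomology I would fix $\mu\ne\mu'$ and argue as follows. Distinct sequences of positive definite quadratic forms yield distinct Gaussians, so $F_\mu\ne F_{\mu'}$ as elements of $L^2(N,da)$; since $K_0$ is a linear subspace, $F_\mu-F_{\mu'}$ is a \emph{nonzero} almost invariant vector, i.e.\ an element of $K_0\setminus 0$. The map $v$ is linear, hence $\beta_\mu-\beta_{\mu'}=v(F_\mu-F_{\mu'})$, and by Theorem~\ref{THM:31-34} — equivalently, by its Corollary, which asserts precisely that cocycles in $Z_0^1$ are pairwise non-cohomologous and that $K_0\cong Z_0^1$ — this cocycle is non-cohomologous to $0$. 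Therefore $\beta_\mu$ and $\beta_{\mu'}$ are non-cohomologous, which is the claim. The accompanying richness statement follows from the density of $\opn{span}\{F_\mu\}$ in $K$ already observed before Theorem~\ref{THM:6-61}.

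I do not anticipate a genuine obstacle: the only substantive input is the almost invariance of the $F_\mu$, which is Theorem~\ref{THM:6-61}, and everything else is bookkeeping with Theorems~\ref{THM:31-33}, \ref{THM:31-34} and the Corollary to Theorem~\ref{THM:31-34}. The one point that must not be glossed over is that \emph{a single} weight $u_0$ has to be used for all $\mu$, so that all the $\beta_\mu$ lie in the \emph{same} group $Z_0^1$: changing $u$ only alters a cocycle within its cohomology class (Theorem~\ref{THM:31-35}), so the normalization $f_\mu(r)=e^{-r/2}F_\mu$ in the statement must be read as fixing $u_0(r)=r$ throughout. A second, harmless point is the freedom in normalizing the Lebesgue measures $da_i$ used in the proof of Theorem~\ref{THM:6-61}; it only rescales each $F_\mu$ by a positive constant and is irrelevant to the present argument.
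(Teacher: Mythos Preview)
Your proposal is correct and follows essentially the same route as the paper: the corollary is stated there without a separate proof, being an immediate consequence of Theorem~\ref{THM:6-61} (the $F_\mu$ are almost invariant) together with Theorem~\ref{THM:31-34} and its Corollary ($K_0\cong Z_0^1$, cocycles in $Z_0^1$ are pairwise non-cohomologous). Your additional care in fixing a single weight $u_0(r)=r$ so that all $\beta_\mu$ land in the same $Z_0^1$, and in noting that $\mu\ne\mu'$ forces $F_\mu-F_{\mu'}\in K_0\setminus 0$, simply makes explicit what the paper leaves to the reader.
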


\subsection{The case of the Heisenberg group}\label{sect:04}
Here we describe the special representation of the Heisenberg group $N$ of
order $2n-1$ associated with an irreducible representation of $N$ and
its extension to a representation of the group
$P={\mathbb R\,}^*_+\rightthreetimes N$.

The group $N$ is a nilpotent group of class~$2$, so that we can apply
the above definitions and constructions. We realize $N$ as the set of
pairs  $(it,z)$, $t\in {\mathbb R\,}$, $z\in{\mathbb C\,}^{n-1}$, or, which is more
convenient, as the set of pairs
$$
(\zeta ,z),\quad\text{where}\quad  \zeta = it-\frac 12 |z|^2, \quad
t\in {\mathbb R\,}, \quad  z\in{\mathbb C\,}^{n-1},
$$
with the multiplication law
$$
(\zeta_1 ,z_1)(\zeta_2 ,z_2)=(\zeta_1+\zeta_2-z_1z^*_2 ,z_1+z_2).
$$
The group of automorphisms ${\mathbb R\,}^*_+$ acts on $N$ according to the formula
$(\zeta ,z) \mapsto (r^2\zeta ,rz)$, so that we have an embedding of $N$ into
the semidirect product
$P={\mathbb R\,}^*_+\rightthreetimes N$.

Up to conjugacy of representations with respect to the action
of the group ${\mathbb R\,}^*_+$, there are two irreducible infinite-dimensional
unitary representations of the group $N$. The first representation $T$
can be realized in the Hilbert space
$K$ of entire analytic functions
$f(z)$ on ${\mathbb C\,}^{n-1}$ with the norm
\begin{equation}{}\label{58-44}
\|f\|^2
=\int_{{\mathbb C\,}^{n-1}}|f(z)|^2\,e^{-zz^*}\,d\mu(z),
\quad zz^*=\sum z_i\overline z_i; \quad
\end{equation}
its operators have the form
\begin{equation}{}\label{14-51}
(T(\zeta _0,z_0)f)(z)
=e^{\zeta _0-zz^*_0}\,f(z+z_0).
\end{equation}
The second representation can be realized in the space of entire
antianalytic functions. For definiteness, we confine ourselves to
considering the first representation $T$.

The special representation $\widetilde T$ of the group $N$ associated with
$T$ is realized in the space
$\mathcal K=\int_0^\infty K_r d^*r$ and is given by the formula
\begin{equation}{}\label{14-5}
(\widetilde T(\zeta _0,z_0)f)(r,z)=e^{r^2\zeta _0-rzz^*_0}\,f(r,z+rz_0).
\end{equation}
It can be extended to a special unitary
representation
of the group $P={\mathbb R\,}^*_+\rightthreetimes N$, where the operators
corresponding to elements of the subgroup ${\mathbb R\,}^*_+$
are given by the formula
\begin{equation}{}\label{14-52}
(\widetilde T(r_0)f)(r,z)=f(rr_0,z).
\end{equation}
{\it The representation $\widetilde T$ of the group $P$ is irreducible and
has a nontrivial 1-cocycle of the form}
\begin{equation}{}\label{14-13}
b(g;r,z)=(\widetilde T (g)f)(r,z)-f(r,z),\quad\text{where}\quad f(r,z)=e^{-r^2}.
\end{equation}

There exists an embedding of $P$ into the semisimple Lie group
$U(n,1)$ such that every element
$g\in U(n,1)$ can be uniquely written as a product
$g=pu$, $p\in P$,  $u\in U$, where $U$ is the maximal compact subgroup
of $U(n,1)$. Using this embedding, we briefly describe the extension
of the special representation $\widetilde T$  of the group $P$ to a special
unitary representation of the group
$U(n,1)$ (for more details, see
 \cite{VG-1}).

Consider the homogeneous space
$L=U \backslash  U(n,1) $ of the group $U(n,1)$
realized as the space of pairs
$v=(a,b)\in{\mathbb C\,} \oplus {\mathbb C\,}^{n-1}$, where $a+\overline a +bb^*<0$. The
subgroup $U$ is the stationary subgroup of the point
$v_0=(-1,0)$, so that every element
$v\in L$ can be written in the form $v=pv_0$ with $p\in P$.

With every point
$v=(a,b)\in L$ we associate the following function $f_v$
on ${\mathbb R\,}^*_+ \times {\mathbb C\,}^{n-1}$:
\begin{equation}{}\label{4-11}
f_v(r,z)=e^{r^2a+(z,b)},\quad\text{where}\quad (z,b)=\sum z_ib_i.
\end{equation}
For every fixed $r$, these functions lie in the space $K$ and form a
total subset in $K$;
their differences
$f_{v_1}-f_{v_2}$ lie in the space
$\mathcal K=\int_0^\infty K_r d^*r$ and form a total subset
$M$ in $\mathcal K$.

On the set $M$, the original formulas for the operators
$\widetilde T(g)$ of the special representation of $P$ are equivalent to the
following ones:
\begin{equation}{}\label{4-12}
\widetilde T(g)(f_{v_1}-f_{v_2})=f_{gv_1}-f_{gv_2}.
\end{equation}
\begin{DEF}{} The extension of the representation
$\widetilde T$ of the subgroup $P$ to the group $U(n,1)$ is defined on the
total set $M$ by the same formula~\eqref{4-12}.
\end{DEF}
The nontrivial 1-cocycle of this representation has the form
$$
b(g)=f_{gv_0}- f_{v_0}.
$$

\subsection{The Poisson representations of current groups
$N^X$, where $N$ is a graded nilpotent Lie group}\label{sect:05}
We begin with brief definitions of the countable tensor product of
Hilbert spaces and quasi-Poisson measures.

The {\it countable tensor product
$\otimes _{n=1} ^\infty (K_n,h_n)$ of punctured Hilbert spaces}
$(K_n,h_n)$, where $h_n\in K_n$ are fixed
unit vectors in Hilbert spaces $K_n$, is the Hilbert space obtained by the norm
completion of the inductive limit of the finite tensor products
$H_k=\otimes _{n=1} ^k K_n$ with respect to the embeddings
$H_k\to H_k \otimes h_{k+1} \subset H_{k+1}$.

{\it The Poisson and quasi-Poisson measures.} Every pair
$(Y,\mu)$, where $Y$ is a standard Borel space (the state space) and
$\mu$ is an infinite $\sigma$-finite continuous
measure on $Y$ (the intensity measure) gives rise to the Poisson pair
$(\mathcal E(Y),\nu)$, where
$\mathcal E(Y) $ is the space of all configurations on $Y$
(finite or countable collections of points from $Y$ containing
every point with at most finite multiplicity) and
$\nu$ is the Poisson measure on
$\mathcal E(Y)$ with intensity measure $\mu$;
for more details, see \cite{KG} and \cite{VG-1}.

A {\it quasi-Poisson measure} associated with the Poisson measure
$\nu$ is an infinite (sigma-finite) measure $\sigma $ on $\mathcal E(Y)$
that is equivalent with respect to $\nu$;
for more details, see \cite{VG-1}.

In what follows, we assume that
$(Y,\mu) =({\mathbb R\,}^*_+, e^{-u(r)}d^*r) \times (X,m)$, where $(X,m)$ is a
space with a probability measure $m$ and
$u(r)$ is a fixed positive function on the half-line such that
$\int_\epsilon ^\infty e^{-\frac {u(r)}{2}} d^*r < \infty $ for $\epsilon >0$.
Thus $\mathcal E(Y)$ is the space of countable sequences
$\omega =\{(r_i,x_i)\}$, where
$r_i\in{\mathbb R\,}^*_+$, $X_i\in X$, and $\nu$ is the Poisson measure on
$\mathcal E(Y)$ with intensity measure
$e^{-u(r)} d^*r dm(x)$. In what follows, we assume that $\sigma$ is the
quasi-Poisson measure $\sigma$ associated with $\nu$ that is
given by the formula
$$
d \sigma (\omega )=\pi(\omega ) d\nu(\omega ),\quad\text{where}\quad
\pi(\omega )=e^{-\sum _{(r,x)\in \omega } u(r)}.
$$
The measure $\sigma $ is uniquely determined by its characteristic
functional, which has the following form:
\begin{equation}{}\label{41-41}
\int_{\mathcal E(Y)} \exp \Bigl( -\sum_{y\in \omega }f(y)\Bigr)d \sigma (\omega )
= \exp \Bigl( \int_{{\mathbb R\,}^*_+ \times X} (e^{-f(r,x)}-e^{-u(r)} d^*r d m(x)\Bigr).
\end{equation}

Now we describe the quasi-Poisson
representation of the
current group $N^X$ associated with a special representation $T$ of
the group $N$. Let $K$ be the space of the representation~$T$ and
$h\in K$ be an almost invariant vector of $T$. With
every configuration $\omega \in \mathcal E(Y)$ we associate the countable
tensor product of punctured
Hilbert spaces $(K_y,h_y)=(K,h)$:
$$
K^ \otimes _ \omega =  \bigotimes _{(r,x)\in \omega } K_{r,x}, \quad
K_{r,x}=K.
$$
The representation $U$ of the group $N^X$ associated with the
representation $T$ of the group~$N$ can be realized in the Hilbert space
$$
\opn{QPS}(\mathcal E(Y), \sigma ,K^\otimes _{\omega })
$$
of sections $f(\omega )$ of the fiber bundle $\mathcal E(Y)$ with fibers
$K^\otimes _{\omega }$,
$\omega \in\mathcal E(Y) $, endowed with the norm
$$
\|f\|^2 =\int_{\mathcal E(Y)} \|f(\omega )\|^2 d \sigma (\omega ).
$$
The operators of this representation
are defined on the set of vectors of the
form $F_v (\omega )= \otimes _{(r,x)\in \omega } v(r,x)$ (which is total in
$\opn{QPS}(\mathcal E(Y), \sigma ,K^\otimes _{\omega })$) by the
following formula:
$$
(U(g(\cdot ))F_v)(\omega )= \bigotimes _{(r,x)\in \omega } T_r(g(x)) v(r,x),
$$
where $T_r$ are the representations of $N$ conjugate to the special
representation $T$.

The representation $U$ of the group $N^X$ is obviously reducible.

\subsection{The extension of the representation $U$ of the group $N^X$
to a representation of the group $G^X$, where
$G={\mathbb R\,}^*_+\rightthreetimes N$}\label{sect:06}

Now we describe an extension of the representation~$U$
of the group $N^X$ in the quasi-Poisson space
$\opn{QPS}(\mathcal E(Y), \sigma ,K^\otimes _{\omega })$ constructed
above to a representation of
the group
$G^X=({\mathbb R\,}^*_+)^X\rightthreetimes N^X$, where, by definition,
$({\mathbb R\,}^*_+)^X$ is the group of measurable maps
$\widetilde r: X\to{\mathbb R\,}^*_+$ such that
$$
\int_X |\log  r(x)| dm(x)< \infty .
$$
There is a natural action of the group $({\mathbb R\,}^*_+)^X$ by transformations
$$
(r,x)\mapsto \widetilde r(r,x)=(r(x)r,x)
$$
of the space $Y={\mathbb R\,}^*_+ \times X $
and, consequently, by transformations
$\omega \mapsto \widetilde r \omega $ of the space of configurations
$\mathcal E(Y)$. It is known (see \cite{VG-1}) that
{\it the quasi-Poisson measure $\sigma $ is ergodic and projectively
invariant under the group of transformations
$({\mathbb R\,}^*_+)^X$:}
$$
d \sigma (\widetilde r \omega )=e^{\int_X\log r(x) dm(x)} d \sigma (\omega ).
$$

With the elements of the group $({\mathbb R\,}^*_+)^X$ we associate the  operators on
$\opn{QPS}(\mathcal E(Y), \sigma ,K^\otimes _{\omega })$ given by the formula
\begin{equation}{}\label{48-31}
(U(\widetilde r)f)(\omega )=e^{\frac 12\int_X\log r(x) dm(x)} f(\widetilde r \omega ).
\end{equation}

\begin{THM}{}\label{THM:45-11} The operators $U(\widetilde r)$ are unitary;
together with the operators corresponding to the
elements of the subgroup
$N^X$, they generate a unitary representation of the whole group $G^X$.
\end{THM}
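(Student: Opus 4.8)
The plan is to verify the two assertions in Theorem~\ref{THM:45-11} separately: first that each $U(\widetilde r)$ is unitary on $\opn{QPS}(\mathcal E(Y),\sigma,K^\otimes_\omega)$, and then that the operators $U(\widetilde r)$ together with the previously constructed operators $U(g(\cdot))$, $g(\cdot)\in N^X$, satisfy the commutation relations defining the semidirect product $G^X=({\mathbb R\,}^*_+)^X\rightthreetimes N^X$ and hence generate a representation of $G^X$.

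For unitarity, the key input is the projective invariance of the quasi-Poisson measure already quoted from \cite{VG-1}, namely $d\sigma(\widetilde r\omega)=e^{\int_X\log r(x)\,dm(x)}\,d\sigma(\omega)$. Writing $c(\widetilde r)=\int_X\log r(x)\,dm(x)$ (finite by the defining condition on $({\mathbb R\,}^*_+)^X$), I compute for $f\in\opn{QPS}$
\begin{equation*}
\|U(\widetilde r)f\|^2=\int_{\mathcal E(Y)}e^{c(\widetilde r)}\,\|f(\widetilde r\omega)\|^2_{K^\otimes_{\widetilde r\omega}}\,d\sigma(\omega),
\end{equation*}
then change variables $\omega'=\widetilde r\omega$, using that the fiber $K^\otimes_\omega=\bigotimes_{(r,x)\in\omega}K$ depends on $\omega$ only through its cardinality structure and that the map $(r,x)\mapsto(r(x)r,x)$ is a bijection of $Y$, so that $\widetilde r$ induces a measurable bijection of $\mathcal E(Y)$ carrying $K^\otimes_\omega$ isometrically onto $K^\otimes_{\widetilde r\omega}$. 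The Jacobian factor $e^{c(\widetilde r)}$ from the quasi-invariance formula cancels the $e^{c(\widetilde r)}$ in front, giving $\|U(\widetilde r)f\|^2=\|f\|^2$. The same computation shows $U(\widetilde r_1)U(\widetilde r_2)=U(\widetilde r_1\widetilde r_2)$ — here the cocycle identity $c(\widetilde r_1\widetilde r_2)=c(\widetilde r_1)+c(\widetilde r_2)$ makes the scalar factors multiply correctly — and $U(\widetilde r^{-1})=U(\widetilde r)^{-1}$, so each $U(\widetilde r)$ is a surjective isometry, i.e.\ unitary, and $\widetilde r\mapsto U(\widetilde r)$ is a representation of $({\mathbb R\,}^*_+)^X$.

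For the semidirect product relations it suffices to check them on the total set of vectors $F_v(\omega)=\bigotimes_{(r,x)\in\omega}v(r,x)$. Applying $U(g(\cdot))$ gives $\bigotimes_{(r,x)\in\omega}T_r(g(x))v(r,x)$; applying $U(\widetilde r)$ then produces the scalar $e^{\frac12 c(\widetilde r)}$ times the reindexed tensor $\bigotimes_{(r,x)\in\widetilde r\omega}(\cdots)$. Tracking the two orders of composition and using $T_r(g(x))$ evaluated at the shifted first coordinate $r(x)r$ — together with the defining relation $T_{r(x)r}=T_r(\,\cdot^{r(x)})$ coming from the conjugation formula $T_r(g)=T(g^r)$ and the ${\mathbb R\,}^*_+$-action on $N$ — one obtains exactly $U(\widetilde r)U(g(\cdot))U(\widetilde r)^{-1}=U(g(\cdot)^{\widetilde r})$, where $g(\cdot)^{\widetilde r}$ is the pointwise action of $\widetilde r$ on the $N^X$-current. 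Since $N^X$ and $({\mathbb R\,}^*_+)^X$ are already represented, these relations show the combined operators generate the representation of $G^X$.

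The main obstacle is not any single estimate but the careful bookkeeping in the change of variables on configuration space: one must check that the bijection $\omega\mapsto\widetilde r\omega$ is genuinely measure-class preserving with the stated Jacobian (this is the cited ergodicity/projective-invariance result, so it may be invoked), and, more delicately, that the identification of fibers $K^\otimes_\omega\cong K^\otimes_{\widetilde r\omega}$ is canonical and isometric with respect to the punctured-tensor-product structure — the punctured basepoints $h_{r,x}=h$ are the same in every fiber, so the identification is simply relabelling the index set, but this should be stated explicitly. Once the fiberwise identification and the scalar Jacobian are pinned down, both unitarity and the commutation relations follow by direct substitution on the total set $\{F_v\}$.
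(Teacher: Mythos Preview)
Your proposal is correct and follows exactly the paper's approach: the paper's proof consists of two sentences, stating that unitarity follows from the projective invariance of $\sigma$ and that the second assertion ``immediately follows from the definition of these operators.'' You have simply unpacked these two sentences in full detail---the change-of-variables computation for unitarity and the verification of the semidirect product relations on the total set $\{F_v\}$---so there is no divergence in method.
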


Indeed, the operators  $U(\widetilde r)$ are unitary by the projective
invariance of the measure $\sigma$. The second assertion
immediately follows from the definition of these
operators.

Note that the operators $U(g)$ for the whole group $G^X$ can be
defined by a single formula:
\begin{equation}{}\label{7-17}
(U(g)F_v)(\omega )=e^{\frac 12\int_X\log\widetilde r(x) dm(x)} F_{\widetilde T(g)v},
\end{equation}
where $r(\cdot ) $ is the image of $g(\cdot ) $ under the homomorphism
$G^X  \to({\mathbb R\,}^*_+)^X$.

\begin{THM}{}\label{THM:48-32}
If the summable representation $T$ of the group $N$ satisfy
to the conditions of Theorem~\ref{THM:31-36},
where $G_0=N,$ then the representation $U$ of the group
$G^X={\mathbb R\,}^*_+\rightthreetimes N^X$ in the space
$\opn{QPS}(\mathcal E(Y ), \sigma ;K _{\omega }^ \otimes )$
associated with $T$ is irreducible.
\end{THM}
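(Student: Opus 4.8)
The plan is to prove irreducibility of $U$ by showing that the von Neumann algebra $\mathcal A$ generated by the operators $U(g)$, $g\in G^X$, is all of $\mathcal B(\mathcal H)$, where $\mathcal H = \operatorname{QPS}(\mathcal E(Y),\sigma;K^\otimes_\omega)$. This is the standard route for Fock/Poisson-type representations of current groups: one reduces to an ergodicity statement for the base dynamical system plus irreducibility of the local (fiberwise) representation. First I would isolate the ``commutative part'': restricting $U$ to the subgroup $({\mathbb R\,}^*_+)^X \subset G^X$, formula~\eqref{48-31} shows that these operators realize the quasi-regular representation attached to the measure-class-preserving action of $({\mathbb R\,}^*_+)^X$ on $(\mathcal E(Y),\sigma)$. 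Since, as quoted from \cite{VG-1}, this action is \emph{ergodic}, the commutant of $\{U(\widetilde r)\}$ inside the algebra of decomposable operators over $\mathcal E(Y)$ consists of operators acting fiberwise on $K^\otimes_\omega$; combined with the full diagonal algebra $L^\infty(\mathcal E(Y),\sigma)$ (which lies in $\mathcal A$ because $U(g(\cdot))$ for $g$ in the abelian current subgroup $(R_1)^X$ together with $U(\widetilde r)$ generate enough multiplication operators, using the characteristic functional~\eqref{41-41}), ergodicity forces any element of the commutant $\mathcal A'$ to be a measurable field $\omega\mapsto A(\omega)$ with $A(\omega)\in\mathcal B(K^\otimes_\omega)$, constant along orbits.

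Next I would use the hypotheses inherited from Theorem~\ref{THM:31-36}: $T$ is irreducible and the conjugate representations $T_r$ of $N$ are pairwise inequivalent. Fix a configuration $\omega=\{(r_i,x_i)\}$. Acting with $U(g(\cdot))$ for $g\in N^X$ supported near a single $x_i$ moves only the corresponding tensor factor, by the operator $T_{r_i}(g(x_i))$; since $T$ is irreducible, so is each $T_{r_i}$, and hence the algebra generated by all such single-site operators is $\bigotimes_i \mathcal B(K_{r_i,x_i})$ acting on $K^\otimes_\omega$ (using that a countable tensor product of type $I_\infty$ factors along a reference sequence of unit vectors is again a type $I$ factor, provided the reference vectors $h_{r_i}$ are the almost-invariant vectors — this is exactly where the canonical-family structure and the ``punctured'' tensor product enter). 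Therefore $A(\omega)$ must be a scalar $\lambda(\omega)$ on each fiber. Ergodicity of $\sigma$ under $({\mathbb R\,}^*_+)^X$ then forces $\lambda$ to be $\sigma$-a.e.\ constant, so $\mathcal A'=\mathbb C$ and $U$ is irreducible.

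A separate point that must be handled is why the nonequivalence of the $T_r$ is needed at all — it is needed to rule out ``diagonal'' elements of the commutant that permute tensor factors sitting over points $(r_i,x_i)$ and $(r_j,x_j)$ with $x_i=x_j$ but $r_i\ne r_j$, or more precisely to guarantee that the field of local representations $r\mapsto T_r$ is multiplicity-free so that no extra intertwiners appear when several points of the configuration project to nearby $x$'s; combined with the fact that $\sigma$-a.e.\ configuration has all its $r$-coordinates distinct (the intensity measure $e^{-u(r)}d^*r\,dm$ is continuous), this closes the gap. The main obstacle I expect is precisely the rigorous justification that the single-site operators $T_{r_i}(g(x_i))$, as $g$ ranges over a suitable class of currents approximating a delta at $x_i$, genuinely generate the full $\mathcal B(K_{r_i,x_i})$ on each fiber and that these actions at distinct sites are independent in the von Neumann sense inside the infinite tensor product; this requires care with the incomplete (punctured) tensor product and with measurability of the field $\omega\mapsto\mathcal B(K^\otimes_\omega)$, and it is here that one invokes the detailed analysis of the quasi-Poisson model from \cite{VG-1}.
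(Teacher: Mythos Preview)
Your overall strategy---analyze the commutant, reduce to fiberwise scalars, then invoke ergodicity of the $({\mathbb R\,}^*_+)^X$-action on $(\mathcal E(Y),\sigma)$---is exactly the paper's strategy. However, the paper's execution is considerably more direct, and your version contains an unnecessary detour.

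The paper argues in the opposite order from you. It first observes that the restriction of $U$ to $N^X$ is a direct integral $\int^\oplus_{\mathcal E(Y)} U_\omega\,d\sigma(\omega)$, where $U_\omega$ is the representation of $N^X$ on $K^\otimes_\omega$ given by $g(\cdot)\mapsto\bigotimes_{(r,x)\in\omega}T_r(g(x))$. The hypotheses of Theorem~\ref{THM:31-36} (irreducibility of $T$, pairwise nonequivalence of the $T_r$) are exactly what makes the $U_\omega$ irreducible and pairwise nonequivalent for $\sigma$-a.e.~$\omega$. Standard direct integral theory then gives immediately that any bounded $A$ commuting with $U|_{N^X}$ is multiplication by some $f\in L^\infty(\mathcal E(Y),\sigma)$. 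Commutation with $U|_{({\mathbb R\,}^*_+)^X}$ forces $f$ to be invariant along orbits, and ergodicity finishes. That is the entire proof.

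Your step of first embedding $L^\infty(\mathcal E(Y),\sigma)$ into the von Neumann algebra $\mathcal A$ generated by $U(G^X)$---via the abelian currents $(R_1)^X$ and the characteristic functional---is not needed, and the sketch you give for it is vague (it is not clear that those operators alone generate all diagonal multiplications). The point is that once the fiber representations of $N^X$ are irreducible \emph{and} mutually disjoint, the commutant of $U|_{N^X}$ is automatically diagonal; you do not need to produce the diagonal algebra inside $\mathcal A$ by hand. Likewise, your explanation of why nonequivalence of the $T_r$ matters (ruling out permutations of tensor factors) is a special case of the real reason: it guarantees the direct integral over $\omega$ is multiplicity-free, so no off-diagonal intertwiners exist at all.
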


\begin{proof}
Let us prove that every bounded operator $A$ on
$\opn{QPS}(\mathcal E(Y ), \sigma ;K _{\omega }^ \otimes )$ that commutes
with the operators $U(g),$ $g\in G^X,$ is a scalar operator.

It follows from Theorem~\ref{THM:31-36} that the representations
$U_{\omega}(g)$ of the group $N^X$ in the spaces $K^\otimes_{\omega}$
are (up to a subset of zero measure in the space $\mathcal E(Y )$)
irreducible and pairwise nonequivalent. Hence, every operator $A$ on
$\opn{QPS}(\mathcal E(Y ), \sigma ;K _{\omega }^ \otimes )$ that commutes
with the operators
$U(g),$ $g\in N^X,$ is scalar on every $K^\otimes_{\omega}$.
i.e., is the multiplication by a function $f(\omega)$.
If $A$ commutes also with the operators $U(g)$, $g\in ({\Bbb R}^*_+)^X,$
then $f(\omega)$ is constant on the orbits of this group
un the space ${\rm E}(Y).$ 
Since the measure $\sigma$ is ergodic with respect to $({\Bbb R}^*_+)^X,$
it follows that $A$ is a scalar operator.
\end{proof}

\subsection{The Poisson representation
of the current group $P^X=({\mathbb R\,}^*_+)^X\rightthreetimes N^X$,
where $N$ is the Heisenberg group of order $2n-1$, and its extension
to a representation of the current group $U(n,1)^X$}\label{sect:07}

Let $T$ be the irreducible representation of the Heisenberg group $N$
in the Hilbert space $K$ described in Section~\ref{sect:04} and
$\sigma $ be the quasi-Poisson measure on
$\mathcal E(Y)=\mathcal E({\mathbb R\,}^*_+ \times X)$ with characteristic
functional~\eqref{41-41}, where $u(r)-2r^2$.

We will describe the representation $U$ of the current group
$P^X$ in the quasi-Poisson space
$\opn{QPS}(\mathcal E(Y), \sigma ,K^\otimes _{\omega })$ associated with $T$.

Let $\widetilde T$ be the special representation of the group $P$ associated
with $T$, see Section~\ref{sect:04}. Consider the following total subset of
vectors in
$\opn{QPS}(\mathcal E(Y), \sigma ,K^\otimes _{\omega })$:
\begin{equation}{}\label{24-56}
F_p (\omega )=F_{\widetilde T(p) f}= \bigotimes _{(r,x)\in \omega } (\widetilde T(p(x))f)(r,x;z),
\quad\text{where}\quad f(r,x;z)=e^{- r^2}
\end{equation}
and $p$ ranges over the group $P^X$. By the general formula~\eqref{7-17},
the operators $U(p)$ for elements of  the group $P^X$ act on
this set according to the formula
\begin{equation}{}\label{39}
U(p_0)F_p=e^{\frac 12\int_X\log r_0(x) dm(x)} F_{p_0p},
\end{equation}
where $r_0(\cdot ) $ is the image of $p_0(\cdot ) $
under the homomorphism $P^X  \to{\mathbb R\,}^*_+$.

It follows from the irreducibility of $T$ that the representation
$U$ of the current group $P^X$ is also irreducible.

According to Section~\ref{sect:04}, the special representation
$\widetilde T$ of the group $P$ can be extended to a representation of the
group $U(n,1)$. Using this extension, we will describe an extension of
the representation of $P^X$ to a representation of
$U(n,1)^X$.

In the space $\opn{QPS}(\mathcal E(Y), \sigma ,K^\otimes _{\omega })$
we consider the set $M$ of vectors of the form
\begin{equation}{}\label{57}
F_g(\omega )= \bigotimes _{(r,x)\in \omega } (\widetilde T(g(x))f)(r,x;z), \quad
g\in U(n,1)^X.
\end{equation}

Note that $F_{gu}=F_g$ for every $u\in U^X$. Thus $M$
coincides with the total set~\eqref{24-56} of vectors $F_p$, $p\in P^X$.

We define the operators $U(g_0)$ for
$g_0\in U(n,1)^X$ on the set $M$ by the following formula:
$$
U(g_0) F_g=e^{\int_X l \lambda (g_0(x),g(x))dm(x)} F_{g_0g},
$$
where
$$
\lambda (g_0,g)=-\frac 12 \|b (g_0)\|^2 -\opn{Re}  {\langle b(g_0),f \rangle}-
{\langle \widetilde T(g_0) b(g), \widetilde T(g_0) f \rangle }+
{\langle  b(g),  f \rangle }.
$$

{\it The restrictions to $P^X$ of the operators
$U(g)$ thus defined coincide on $M$ with the operators
$U(p)$ of the representation of the group $P^X$ defined above.}

\begin{THM}{}\label{THM:72} The operators $U(g)$, $g\in U(n,1)^X$,
preserve the inner products on $M$ and are related by the formula
$$
U( g_1 g_2) F_g=e^{i\int_X \rho ( g_1(x), g_2(x))dm(x)}\,
U( g_1)\,U( g_2) F_g\quad\text{for any}
\quad  g_1, g_2, g\in U(n,1)^X,
$$
where
$$
\rho ( g_1, g_2)=-\opn{Im}({\langle  b( g_2),
b( g_1^{-1} \rangle}))+
\opn{Im} {\langle  b( g_1 g_2) - b( g_1)- b( g_2), f  \rangle}.
$$
Therefore, they induce an extension of the original representation of
the group $P^X$ to a unitary projective representation of the group
$U(n,1)^X$.
\end{THM}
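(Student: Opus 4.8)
The plan is to verify directly the two asserted identities on the total set $M$, since everything reduces to computations with the functions $f_v$ of~\eqref{4-11} and the explicit cocycle $b(g)=f_{gv_0}-f_{v_0}$ of the special representation of $U(n,1)$. First I would record the key ``factorization'' property already implicit in Section~\ref{sect:04}: the extended operators $\widetilde T(g)$, $g\in U(n,1)$, satisfy $\widetilde T(g)(f_{v_1}-f_{v_2})=f_{gv_1}-f_{gv_2}$ on the total set $\{f_{v_1}-f_{v_2}\}$ in $\mathcal K$, so that $\widetilde T(g)$ acts geometrically on the parameter space $L=U\backslash U(n,1)$, with $b(g)=f_{gv_0}-f_{v_0}$ the associated cocycle. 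From this one reads off the pointwise cocycle identity $b(g_1g_2)=\widetilde T(g_1)b(g_2)+b(g_1)$ and the identity $\widetilde T(g_1)(b(g_2)+f)=f_{g_1g_2v_0}=b(g_1g_2)+f$, valid up to elements of $\mathcal K$; these are the only structural facts about $b$ and $\widetilde T$ that will be used.

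Next I would compute $\|U(g)(F_{g_1}-F_{g_2})\|^2$ on the fiber level. On each $K^\otimes_\omega$, the tensor factor $\widetilde T(g(x))f$ differs from $f$ by the cocycle component $b(g(x))$, so the quasi-Poisson inner product of two such vectors $F_{g}$, $F_{g'}$ is governed by the characteristic functional~\eqref{41-41} applied to the function $(r,x)\mapsto -\log\langle(\widetilde T(g(x))f)(r,x;z),(\widetilde T(g'(x))f)(r,x;z)\rangle_{K_r}$, exactly as in the construction of the representation $U$ of $N^X$ in Section~\ref{sect:05}. The exponent $\int_X l\,\lambda(g_0(x),g(x))\,dm(x)$ in the definition of $U(g_0)$ is precisely the scalar correction needed so that, after applying~\eqref{41-41} and the projective invariance $d\sigma(\widetilde r\omega)=e^{\int_X\log r(x)\,dm(x)}d\sigma(\omega)$ with $\widetilde r$ the $({\mathbb R\,}^*_+)^X$-image of $g_0$, the cross terms coming from $\|b(g_0)\|^2$, $\mathrm{Re}\langle b(g_0),f\rangle$, $\langle\widetilde T(g_0)b(g),\widetilde T(g_0)f\rangle$ and $\langle b(g),f\rangle$ cancel; this is the content of the stated identity that the restriction to $P^X$ agrees with~\eqref{39}, and the same bookkeeping shows $\langle U(g)F_{g_1},U(g)F_{g_2}\rangle=\langle F_{g_1},F_{g_2}\rangle$ for all $g_1,g_2$. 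So preservation of inner products on $M$ follows from~\eqref{41-41} together with the cocycle identity for $b$.

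For the multiplier identity I would compose the definitions: $U(g_1)U(g_2)F_g$ produces the vector $F_{g_1g_2g}$ times the scalar $\exp\bigl(\int_X[l\lambda(g_1(x),g_2(x)g(x))+l\lambda(g_2(x),g(x))]dm(x)\bigr)$, while $U(g_1g_2)F_g$ produces $F_{g_1g_2g}$ times $\exp\bigl(\int_X l\lambda((g_1g_2)(x),g(x))\,dm(x)\bigr)$; the vectors agree (again via $b(g_1g_2)=\widetilde T(g_1)b(g_2)+b(g_1)$ and $\widetilde T(g)$ unitary, so $F_{g_1}(F_{g_2}F_g)$-associativity holds on $M$), so the multiplier is $\exp\bigl(i\int_X\rho(g_1(x),g_2(x))dm(x)\bigr)$ with $i\rho$ the pointwise cochain defect $\lambda(g_1,g_2g)+\lambda(g_2,g)-\lambda(g_1g_2,g)$. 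Expanding $\lambda$ with $b(g_1g_2g)=\widetilde T(g_1)\widetilde T(g_2)b(g)+\widetilde T(g_1)b(g_2)+b(g_1)$ and using that $\widetilde T(g_i)$ are unitary, every norm-squared and every term involving $f$ that is ``diagonal'' cancels, and what survives is exactly the antisymmetric combination $-\mathrm{Im}\langle b(g_2),b(g_1^{-1})\rangle+\mathrm{Im}\langle b(g_1g_2)-b(g_1)-b(g_2),f\rangle$; one uses here $\langle\widetilde T(g_1)b(g_2),b(g_1)\rangle=\langle b(g_2),\widetilde T(g_1^{-1})b(g_1)\rangle=-\langle b(g_2),b(g_1^{-1})\rangle$, which comes from the cocycle relation $b(g_1^{-1})=-\widetilde T(g_1^{-1})b(g_1)$.

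The main obstacle I anticipate is the last bookkeeping step: showing that the real, ``symmetric'' part of $\lambda(g_1,g_2g)+\lambda(g_2,g)-\lambda(g_1g_2,g)$ vanishes identically, leaving only a purely imaginary expression independent of $g$. This is a finite but delicate algebraic identity in the Hilbert space $\mathcal K$, and the only genuinely subtle point is keeping track of which inner products are conjugated by which $\widetilde T(g_i)$; everything else is the standard cocycle calculus plus the characteristic-functional formula~\eqref{41-41}. Once the pointwise multiplier $\rho(g_1,g_2)$ is identified, integrating over $X$ and invoking that $\rho$ is bounded (so $\int_X\rho(g_1(x),g_2(x))dm(x)$ converges, using $\int_X|\log r(x)|dm(x)<\infty$) gives the projective representation of $U(n,1)^X$ extending the genuine representation of $P^X$, completing the proof.
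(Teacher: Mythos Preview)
The paper does not prove this theorem: it is the final statement of Section~\ref{sect:07}, immediately followed by the bibliography, and the detailed construction is deferred to~\cite{VG-1}. So there is no proof in the paper to compare your proposal against.

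That said, your outline is the natural route and identifies the right structural ingredients: the geometric action $\widetilde T(g)(f_{v_1}-f_{v_2})=f_{gv_1}-f_{gv_2}$, the cocycle identity $b(g_1g_2)=\widetilde T(g_1)b(g_2)+b(g_1)$ and its consequence $b(g^{-1})=-\widetilde T(g^{-1})b(g)$, and the characteristic functional~\eqref{41-41} as the device for computing inner products of the $F_g$.

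Two points, however, are glossed over and are exactly where the work lies. First, the vector $f(r,z)=e^{-r^2}$ is \emph{not} in $\mathcal K$ (the integral $\int_0^\infty e^{-2r^2}\,d^*r$ diverges at $0$), so inner products such as $\langle b(g_0),f\rangle$ and $\langle\widetilde T(g_0)b(g),\widetilde T(g_0)f\rangle$ are formal; you cannot invoke unitarity of $\widetilde T(g_0)$ to cancel them against $\langle b(g),f\rangle$. Only suitable differences of such expressions converge, and the projective-invariance factor $e^{\int_X\log r(x)\,dm(x)}$ coming from the $({\mathbb R\,}^*_+)^X$-component of $g_0$ is what absorbs the divergence. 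Your paragraph on inner-product preservation gestures at this mechanism but does not carry it out. Second, the multiplier computation requires showing that $\lambda(g_1,g_2g)+\lambda(g_2,g)-\lambda(g_1g_2,g)$ is independent of $g$; you assert this (``what survives is exactly\dots'') but the $g$-dependent terms in $\lambda$ involve precisely the formal inner products just mentioned, so their cancellation is not automatic. What you flag as ``the main obstacle'' and ``delicate algebraic identity'' is in fact the entire content of the proof, not residual bookkeeping.
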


\bigskip

\renewcommand\section\subsection

\end{document}